\numberwithin{equation}{section}
\newtheorem{theorem}{Theorem}[section]
\newtheorem{corollary}[theorem]{Corollary}
\newtheorem{lemma}[theorem]{Lemma}
\newtheorem{proposition}[theorem]{Proposition}
\newtheorem{definition}{Definition}
\newtheorem{appxlemma}{Lemma}[section]
\theoremstyle{definition}
\newtheorem{ex}{Example}
\newcommand{\Sp}{\mathcal{X}}
\newcommand{\C}{\mathcal{C}}
\newcommand{\R}{\mathcal{R}}
\newcommand{\G}{\mathcal{G}}
\newcommand{\NN}{\mathbb{N}}
\newcommand{\RR}{\mathbb{R}}
\newcommand{\pr}[1]{\left(#1\right)}
\DeclareMathOperator{\spann}{span}
\DeclareMathOperator{\kernel}{Ker}
\DeclareMathOperator{\supp}{supp}
\title{Product-form Poisson-like distributions and complex balanced  reaction  systems}
\author{Daniele Cappelletti\footnotemark[1] \and Carsten Wiuf\footnotemark[1]}
\begin{document}

 \footnotetext[1]{Department of Mathematical Sciences, University of Copenhagen, Copenhagen, Denmark.  The authors are supported by  the Danish Research Councils.}

 \tikzset{every node/.style={auto}}
 \tikzset{every state/.style={rectangle, minimum size=0pt, draw=none, font=\normalsize}}

 \maketitle

\begin{abstract}
Stochastic reaction networks are dynamical models of biochemical reaction systems and form a particular class of continuous-time Markov chains on $\NN^n$. Here we provide a fundamental characterisation   that connects structural properties of a network to its dynamical features. Specifically, we define the notion of `stochastically complex balanced systems' in terms of the network's stationary distribution and provide a characterisation of stochastically complex balanced systems, parallel to that established in the 70-80ies for deterministic  reaction networks.  Additionally, we establish that a network is stochastically complex balanced  if and only if an associated deterministic network is  complex balanced (in the deterministic sense), thereby proving a strong link between the theory of stochastic and   deterministic networks.
Further, we prove a stochastic version of the `deficiency zero theorem' and show that  any (not only complex balanced)  deficiency zero reaction network has a product-form Poisson-like stationary distribution on all irreducible components. 
 Finally, we provide  sufficient conditions for when a product-form Poisson-like distribution on a single (or all) component(s) implies the network is complex balanced, and explore the possibility to characterise complex balanced systems in terms of  product-form Poisson-like stationary distributions.
\end{abstract}

\section{Introduction}\label{sec:introduction}

Improved experimental techniques have made it possible to measure molecular fluctuations at a small scale, creating a need for a stochastic description of molecular data \cite{zechner:scalable,hey:switch}. Typically, biochemical reaction networks are  modelled  as  deterministic systems of ordinary differential equations (ODEs), but these models assume the  individual species are in high concentrations and do not allow for stochastic fluctuation. An alternative is stochastic models based on continuous-time Markov chains \cite{kurtz:classical_scaling, kurtz:strong,ingram:nonidentifiability,anderson:design, anderson:book, hey:switch}.  As an example of a stochastic reaction system, consider
 \begin{equation}\label{eq:reactions}
 A+B \cee{<=>[\kappa_1][\kappa_2]} 2C,
  \end{equation}
  where $\kappa_1,\kappa_2$ are positive reaction constants.
 The network consists of three chemical species $A$, $B$ and $C$ and two reactions. Each occurrence of a reaction modifies the species counts, for example, when the reaction $A+B\to2C$ takes places,  the amount of $A$ and $B$ molecules are each decreased by one, while two molecules of $C$ are created.   The species counts are modelled as a continuous-time Markov chain, where the transitions are  single occurrences of  reactions with  transition rates 
   \begin{align*}
   \lambda_1(x) &=\kappa_1x_Ax_B, \quad
   \lambda_2(x) =\kappa_2 x_C(x_C-1),
   \end{align*}
 and $x=(x_A,x_B,x_C)$ are the species counts \cite{anderson:design}. When modelled deterministically, the concentrations (rather than the counts) of the species  change according to an ODE system.

In  a classical paper \cite{kurtz:classical_scaling},  Kurtz explored the relationship between deterministic and stochastic reaction systems, using a scaling argument -- large volume limit -- to link the dynamical behaviour of the two types of systems to each other. 
Other,  mainly recent work, also points to close connections between the two types of systems \cite{whittle:systems,anderson:product-form, anderson:ACR, anderson:lyapunov, ball:asymptotic, kurtz:rescale}. In this paper we explore this relationship further. 

A fundamental link between structural network properties and dynamical features of deterministic reaction networks has been known since the 1970s and 1980s  with the work of Horn, Jackson and Feinberg \cite{horn:general_mass_action, feinberg:review}. Specifically, their theory concerns the existence and uniqueness of equilibria in \emph{complex balanced} systems, with  the `deficiency zero theorem' playing a central role in this context. Complex balanced systems were called cyclic balanced systems by Boltzmann. They have attractive analytical and physical properties; for example a (pseudo-)entropy might be defined which increases along all trajectories (Boltzmann's H-theorem) \cite{boltzmann,horn:general_mass_action}.

A parallel theory for the stochastic regime is not available, and the very concept of ``complex balanced'' does not currently have a stochastic counterpart.  In this paper we develop a theory to fill this gap. We  define \emph{stochastically complex balanced} systems  through properties of the stationary distribution, and we prove results for stochastic reaction networks that are in direct correspondence with the results for deterministic models. In particular, we prove a parallel statement of the deficiency zero theorem and show that all deficiency zero reaction networks have product-form Poisson-like stationary distributions, irrespectively whether they are complexed balanced or not. In fact, in the non-complexed balanced case, the network is complex balanced on the boundary of the state space. 
  
 A second target of our study concerns product-form stationary distributions. Such distributions are computationally and analytically tractable and  appear in  many areas of applied probability, such as,  queueing theory \cite{jackson:productform,kelly:reversibility},  Petri Net theory \cite{marin:analysis}, and stochastic reaction network theory \cite{whittle:systems,mairesse:definciencyzero,anderson:product-form}.
 Specifically, a complex balanced mass-action network has a product-form Poisson-like stationary distribution on every irreducible component \cite{mairesse:definciencyzero,anderson:product-form}.
 As an example, the stationary distribution of the complex balanced reaction system \eqref{eq:reactions} is
 $$\pi_\Gamma(x)=M_\Gamma\frac{\kappa_1^{x_A}\kappa_2^{x_B}\kappa_1^{x_C}}{x_A!x_B!x_C!}\quad\text{for }x\in\Gamma,$$
 where $\Gamma=\{x\in\NN^3\colon x_A+x_B+2x_C=\theta\}$ is an irreducible component of the state space $\NN^3$ and $M_\Gamma$ is a normalising constant. 
 
 We expand the above result on mass-action systems and give general conditions under which the converse statement is true. In particular, we are interested in providing a structural characterisation of the networks with product-form Poisson-like stationary distributions. However, this class of networks is strictly larger than that of complex balanced networks, and a full characterisation seems hard to achieve. We illustrate this with examples.

\section{Background}
 
 We first introduce the necessary notation and background material; see \cite{anderson:design,feinberg:review,erdi:mathematical_models} for general references. We assume standard knowledge about continuous-time Markov chains.
 
 \subsection{Notation} 
 
 We let $\RR$, $\RR_0$ and $\RR_+$ be the real, the non-negative real and the positive real numbers, respectively. Also let $\NN$ be the natural numbers including 0.
 
 For any real number $a\in\RR$, $|a|$  denotes the absolute value of $a$. Moreover, for any vector $v\in\RR^p$, we let $v_i$ be the $i$th component of $v$, $\|v\|$ the Euclidean norm, and $\|v\|_\infty$ the infinity norm, that is, $\|v\|_\infty=\max_i |v_i|$. 
 For two vectors $v,w\in\RR^p$,  we write $v<w$ (resp.\,$v>w$) and $v\leq w$ (resp.\,$v\ge w$),   if the inequality holds component-wise. Further, we define $\mathbbm{1}_{\{v\leq w\}}$ to be one if $v\leq w$, and zero otherwise, and similarly for the other inequalities. If $v>0$ then $v$ is said to be positive. Finally,  $\supp v$ denotes the index set of the non-zero components. For example, if $v=(0,1,1)$ then $\supp v=\{2,3\}$.
 
 If $x\in\RR^q_0$ and $v\in \NN^q$, we define
 $$x^v=\prod_{i=1}^q x_i^{v_i},\quad \text{and}\quad v!=\prod_{i=1}^q v_i!,$$
 with the conventions that $0!=1$ and $0^0=1$. 

 \subsection{Reaction networks}  
 
 A reaction network is a triple $\G=(\Sp,\C,\R)$, where
$\Sp=\{S_1,S_2,\dots,S_n\}$ is a set of $n$ species, $\C$ is a set of $m$ complexes, and $\R\subseteq \C\times\C$ is a set of $k$ reactions, such that  $(y,y)\notin\R$ for all  $y\in\C$. The complexes are  linear combinations of species on $\NN$, identified as vectors in $\RR^n$.  A reaction $(y,y')\in\R$ is denoted by $y\to y'$.  We require that every  species is part of at least one complex, and that every complex is part of at least one reaction. In this way, there are no ``superfluous'' species or complexes and $\G$ is completely determined by the set of reactions $\R$, which we allow to be empty. In  \eqref{eq:reactions}, there are $n=3$ species ($A,B,C$), $m=2$ complexes ($A+B, 2C$), and $k=2$ reactions. 

Given a reaction network $\G$,  the \emph{reaction graph} of $\G$ is the directed graph with node set $\C$ and edge set $\R$. We let $\ell$ be the number of linkage classes (connected components) of the reaction graph. A reaction $y\to y'\in\R$ is \emph{terminal} if any directed path that starts with $y\to y'$ is contained in a closed directed path. We let $\R^*$ be the set of terminal reactions.
 
A reaction network $\G$ is \emph{weakly reversible}, if every reaction is terminal. The network in \eqref{eq:reactions} is weakly reversible, since both  reactions are terminal.
 
  The \emph{stoichiometric subspace} of $\G$ is the linear subspace of   $\RR^n$ given by
 $$S=\spann(y'-y\vert y\to y'\in\R).$$
 For $v\in\RR^n$, the sets $(v+S)\cap\RR^n_0$ are called the \emph{stoichiometric compatibility classes} of $\G$ (Fig.\ $1A$). For the network in \eqref{eq:reactions}, $S=\spann( (-1,1,0),(0,1,-1))\subset\RR^3$, which is 2-dimensional.
 
 \subsection{Dynamical systems}
 
 We will consider a reaction network $\G$ either as a deterministic dynamical system on the continuous space $\RR^n_0$, or as a stochastic dynamical system on the discrete space $\NN^n$. 

 In the deterministic case, the evolution of the species concentrations $z=z(t)\in\RR^n_0$  at time $t$ is modelled as the solution to the ODE
 \begin{equation}\label{eq:ODE_mass_action}
  \frac{d z}{dt}=\sum_{y\to y'\in\R}(y'-y)\lambda_{y\to y'}(z),
 \end{equation}
 for some functions $\lambda_{y\to y'}\colon\RR^n_0\to\RR_0$ and an initial condition $z(0)\in\RR^n_0$. We require that the functions $\lambda_{y\to y'}$ are continuously differentiable, and that $\lambda_{y\to y'}(z)>0$ if and only if $\supp y\subseteq\supp z$. Such functions are called \emph{rate functions}, they constitute a \emph{deterministic kinetics} $K$ for $\G$, and the pair $(\G,K)$ is called a \emph{deterministic reaction system}. If  $\lambda_{y\to y'}(z)=\kappa_{y\to y'}z^{y}$ for all reactions, then the constants $\kappa_{y\to y'}$ are referred to as \emph{rate constants} and the modelling regime is referred to as \emph{deterministic mass-action kinetics}. In this case, the pair $(\G,\kappa)$ is called a \emph{deterministic mass-action system}, where $\kappa\in\RR_+^k$ is the vector of rate constants.
 
 In the stochastic setting, the evolution of the species counts  $X(t)\in\NN^n$  at time $t$ is modelled as a continuous-time Markov chain with state space $\NN^n$. At any state $x\in\NN^n$, the states that can be reached in one step are  $x+y'-y$ for $y\to y'\in\R$, with transition rates $\lambda_{y\to y'}(x)$. The functions $\lambda_{y\to y'}\colon\NN^n\to\RR_0$ are called \emph{rate functions}, and we require that $\lambda_{y\to y'}(x)>0$ if and only if $x\geq y$. A choice of these functions constitute a \emph{stochastic kinetics} $K$ for $\G$ and the pair $(\G,K)$ is called a \emph{stochastic reaction system}.
 If  the reaction $y\to y'$ occurs at time $t$, then the new state  is
 $$X(t)=X(t-)+y'-y,$$
 where $X(t-)$ denotes the previous state.  If for any reaction $y\to y'\in\R$
 $$\lambda_{y\to y'}(x)=\kappa_{y\to y'}\frac{x!}{(x-y)!}\mathbbm{1}_{\{x\geq y\}}.$$
 then the constants $\kappa_{y\to y'}$ are known as \emph{rate constants}, as in the deterministic case, and the modelling regime is referred to as \emph{stochastic mass-action kinetics}. The pair $(\G,\kappa)$ is, in this case, called a \emph{stochastic mass-action system}.
  
 The evolution of the stochastic as well as the deterministic reaction system is confined to the stoichiometric compatibility classes, 
 $$z(t)\in (z(0)+S)\cap\RR^n_0  \quad\text{and}\quad X(t)\in (X(0)+S)\cap\RR^n_0.$$
 In fact, $X(t)\in (X(0)+S)\cap\NN^n$, as $X(t)$ takes values in $\NN^n$. 
  \begin{definition}
Let $\G=(\Sp,\C,\R)$ be a reaction network.
 \begin{enumerate}[a)]
 \item  A reaction network $\G'=(\Sp',\C',\R')$ is a \emph{subnetwork} of $\G$ if  $\R'\subseteq\R$. In this case, it follows that $\Sp'\subseteq\Sp$ and $\C'\subseteq \C$.
 \item A system $(\G',K')$, deterministic or stochastic, is a  \emph{subsystem} of a system $(\G,K)$  if  $\G'$ is a subnetwork of $\G$ and the rate functions agree on the reactions in $\R'$.
 \item The subnetwork $\G^*$ given by the set of terminal reactions $\R^*$ is the \emph{terminal network} of $\G$. We denote $\G^*=(\Sp^*, \C^*, \R^*)$. Furthermore, the subsystem $(\G^*,K^*)$ of $(\G,K)$ is called the \emph{terminal system} of $(\G,K)$.
 \end{enumerate}
 \end{definition}
 \begin{definition}
  The connected components of the reaction graph of the terminal network of $\G$ are called \emph{terminal strongly connected component} of $\G$. For any complex $y$ in $\C^*$, we denote by $(\G_y,K_y)$ the subsystem of $\G$ whose reaction graph is the terminal strongly connected component containing $y$ as node.
 \end{definition}
 
  As an example, consider the mass-action system
 $$2A\cee{<=>[\kappa_1][\kappa_2]}2B\cee{<-[\kappa_3]}A\cee{->[\kappa_4]}0\cee{<=>[\kappa_5][\kappa_6]}C.$$
 Here, there are two terminal strongly connected components, which are $2A\rightleftharpoons 2B$ and $0\rightleftharpoons C$.
 In particular, $(\G_{2A},K_{2A})$ is equal to $(\G_{2B},K_{2B})$ and is given by
 $$2A\cee{<=>[\kappa_1][\kappa_2]}2B.$$
 Finally, if $(\G,\kappa)$ is a mass-action system, any subsystems $(\G',K')$ is a mass-action systems as well and can be denoted by $(\G',\kappa')$.

\section{Deterministic reaction systems}

In this section we will recapitulate the known characterisation of existence and uniqueness of  positive equilibria in complex balanced systems and the connection between complex balanced systems and  deficiency zero reaction networks. As we will show in the subsequent section, this characterisation can be fully translated into a similar characterisation for stochastic reaction networks.

 \subsection{Complex balanced systems}
 
 We start with a definition.
 
 \begin{definition}\label{def:cb}
 A deterministic reaction system $(\G,K)$ is said to be \emph{complex balanced} if there exists a positive \emph{complex balanced equilibrium}, that is, a positive equilibrium point $c\in\RR^n_+$ for the system \eqref{eq:ODE_mass_action}, such that
 \begin{equation}\label{eq:cb}
 \sum_{y'\in\C}\lambda_{y\to y'}(c)=\sum_{y'\in\C}\lambda_{y'\to y}(c)\quad \text{for all }y\in\C.
 \end{equation}
 \end{definition}
The name `complex balanced' refers to the fact that the flow, at equilibrium, entering into the complex $y$  equals the flow exiting from the complex. As an example, the mass-action system in \eqref{eq:reactions} is complex balanced for any choice of $(\kappa_1, \kappa_2)$ and $c=(\kappa_2,\kappa_1,\kappa_1)$ is a complex balanced equilibrium. The class of complex balanced systems is an extension of the class of detailed balanced mass-action systems  \cite{horn:general_mass_action,feinberg:review}.
 
 For mass-action systems, \eqref{eq:cb} becomes
 \begin{equation}\label{eq:cb_mass-action}
 \sum_{y'\in\C}\kappa_{y\to y'}c^y=\sum_{y'\in\C}\kappa_{y'\to y}c^{y'}\quad \text{for all }y\in\C,
 \end{equation}
 with the convention that $k_{y\to y'}=0$ if $y\to y'\not\in\R$.
   
 In the case of mass-action kinetics, we extend Definition \ref{def:cb} to the stochastic case, by saying that a stochastic mass-action system $(\G,\kappa)$ is complex balanced if the deterministic mass-action system $(\G,\kappa)$ is complex balanced. We might therefore refer to complex balanced mass-action systems without specifying whether they are stochastically or deterministically modelled. 
 
The next theorem is a slight generalization of a classical result \cite{horn:general_mass_action}, which provides the backbone for the further characterisation. The generalization includes a property of non-negative equilibria
 \begin{theorem}\label{thm:complex_balanced}
  If a deterministic reaction system $(\G,K)$ is complex balanced, then $\G$ is weakly reversible. Moreover, if $K$ is mass-action kinetics, then all equilibria are complex balanced, that is, they fulfil \eqref{eq:cb_mass-action}. Moreover, there exists exactly one positive equilibrium in each stoichiometric compatibility class, which is locally asymptotically stable.
 \end{theorem}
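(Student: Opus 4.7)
\emph{Weak reversibility.} The first claim is a purely graph-theoretic consequence of \eqref{eq:cb} and the positivity of $c$. I would read \eqref{eq:cb} as a stationary flow condition on the reaction graph. Summing it over the complexes in a fixed strongly connected component $H$ of the graph, the intra-$H$ terms cancel, leaving
\begin{equation*}
\sum_{\substack{y\to y'\in\R\\ y\in H,\, y'\notin H}}\lambda_{y\to y'}(c)
\;=\;
\sum_{\substack{y\to y'\in\R\\ y\notin H,\, y'\in H}}\lambda_{y\to y'}(c).
\end{equation*}
Since $c\in\RR^n_+$ we have $\supp y\subseteq\supp c$ for every complex, so each $\lambda_{y\to y'}(c)$ appearing above is strictly positive. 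Taking $H$ to be a source of the condensation DAG, the right-hand side vanishes, hence so does the left-hand side, and $H$ has no outgoing inter-SCC edges either. Induction on a topological ordering of the condensation then shows that every SCC is isolated, i.e.\ coincides with a linkage class, so every reaction is terminal and $\G$ is weakly reversible.

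\emph{All equilibria are complex balanced (mass-action).} Fix a positive complex balanced equilibrium $c$ and consider the pseudo-Helmholtz function
\[
V(z)=\sum_{i=1}^n \bigl(z_i\log(z_i/c_i)-z_i+c_i\bigr),\qquad z\in\RR^n_+,
\]
which is strictly convex on $\RR^n_+$ with unique critical point $c$. Setting $w=\log z-\log c$ componentwise, a direct computation along \eqref{eq:ODE_mass_action} under mass-action gives
\[
\dot V(z)=\sum_{y\to y'\in\R}\kappa_{y\to y'}\,c^y\,e^{y\cdot w}\,(y'-y)\cdot w.
\]
Applying the elementary inequality $u\leq e^u-1$ with $u=(y'-y)\cdot w$ bounds this sum by $\sum_{y\to y'\in\R}\kappa_{y\to y'}c^y(e^{y'\cdot w}-e^{y\cdot w})$, and this latter sum vanishes after reindexing by target complex and invoking \eqref{eq:cb_mass-action} at $c$. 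Hence $\dot V\leq 0$, with equality iff $(y'-y)\cdot w=0$ for every reaction, which is equivalent to $z$ being itself complex balanced. Any positive equilibrium $z^*$ satisfies $\dot V(z^*)=0$ trivially, so it must verify \eqref{eq:cb_mass-action}.

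\emph{Existence, uniqueness and local stability.} The calculation above identifies the set of positive complex balanced equilibria as the exponential manifold $E=\{z\in\RR^n_+:\log z-\log c\in S^\perp\}$, of dimension $\dim S^\perp$. Strict convexity of $V$ on each stoichiometric compatibility class $(z_0+S)\cap\RR^n_+$ guarantees a unique critical point in the class, which necessarily lies in $E$, yielding existence and uniqueness of the positive equilibrium in the class. Local asymptotic stability then follows from LaSalle's invariance principle with $V$ (restricted to the class) as Lyapunov function, since $V$ is strictly convex with unique minimum at the equilibrium and $\dot V\leq 0$ vanishes only there.

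\emph{Main obstacle.} The technical heart of the argument is the inequality $\dot V(z)\leq 0$ together with the characterisation of its equality case: it relies on a careful pairing of the convexity inequality $u\leq e^u-1$ with the graph-Laplacian identity encoding complex balance of $c$. This is the classical Horn--Jackson Lyapunov calculation, and it is the step where I would expect a full writeup to require the most care; the graph-theoretic first part and the convexity arguments of the third part then follow in a relatively routine manner.
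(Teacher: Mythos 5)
There is a genuine gap, and it is precisely at the point where the theorem goes beyond the classical literature. The paper states this result as a \emph{generalization} of Horn--Jackson: the parts you prove (weak reversibility from \eqref{eq:cb} with a positive $c$, complex balancing of all \emph{positive} equilibria via the pseudo-Helmholtz function, and existence/uniqueness/local stability in each stoichiometric compatibility class) are exactly the classical statements, which the paper simply cites from \cite{horn:general_mass_action}. The new content, and the reason the paper supplies a proof at all, is the claim that \emph{all} equilibria in $\RR^n_0$ --- including boundary equilibria with some zero coordinates --- fulfil \eqref{eq:cb_mass-action}. Your argument cannot reach these: the substitution $w=\log z-\log c$ and the identity for $\dot V$ require $z\in\RR^n_+$, so the step ``any equilibrium $z^*$ has $\dot V(z^*)=0$, hence satisfies \eqref{eq:cb_mass-action}'' only covers positive $z^*$. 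The paper's proof handles the boundary case linkage class by linkage class: if every species occurring in a linkage class $L$ is positive at the equilibrium $c$, then the projection of $c$ onto the species of $L$ is a positive equilibrium of the subsystem $(\G_L,\kappa_L)$, which inherits complex balancing, so \eqref{eq:cb_mass-action} holds for the complexes of $L$; if instead some species $S$ of $L$ has $c_S=0$, one uses weak reversibility and the equilibrium condition to propagate zeros iteratively and conclude that every reaction rate in $L$ vanishes at $c$, so \eqref{eq:cb_mass-action} holds trivially as $0=0$. Some argument of this kind is indispensable; without it your proposal proves only the classical core of the theorem, not the stated result.

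A secondary, more minor gloss: in the existence part, strict convexity of $V$ on a compatibility class gives at most one minimizer, but the minimizer of $V$ over the closed class could a priori lie on the boundary of $\RR^n_0$, where it is not a critical point; showing that each class contains a \emph{positive} equilibrium is the nontrivial step of the classical existence proof and needs more than convexity plus LaSalle. Since the paper quotes this part from Horn--Jackson anyway, it would suffice for you to cite it as well, but as written your sketch overstates what convexity alone delivers.
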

 
 As we are not aware of a proof of this more general formulation, we provide one in Appendix \ref{sec:proofs}.
 
 \subsection{Deficiency zero statements}
 
 The \emph{deficiency} plays an important role in the study of complex balanced systems.  The deficiency of $\G$ is defined as
 $$\delta=m-\ell-s,$$
 where $m$ is the cardinality of $\C$, $\ell$ is the number of linkage classes of the reaction graph of $\G$ and $s$ is the dimension of the stoichiometric subspace $S$ \cite{horn:general_mass_action}. The definition hides the geometrical interpretation of the deficiency, which we now will explore. 
 
 Let $\{e_y\}_{y\in\C}$ be a basis of $\RR^m$. Further, define
 $$d_{y\to y'}=e_{y'}-e_{y}\quad\text{and}\quad\xi_{y\to y'}=y'-y$$
 for $y\to y'\in\R$. Let $D=\spann(d_{y\to y'}\vert y\to y'\in\R)$. Then $\dim D=m-\ell$ \cite{horn:general_mass_action}.
 
 The space $D$ is linearly isomorphic to the stoichiometric subspace $S$ if and only if $\delta=0$. Specifically, consider the homomorphism
 \begin{equation}\label{eq:def_phi}
  \begin{array}{rrcl}
    \varphi\colon & \RR^{m} & \to     & \RR^n \\
               & e_y   & \mapsto & y.
  \end{array}
 \end{equation}
 For  $y\to y'\in\R$, we have $\varphi(d_{y\to y'})=\xi_{y\to y'}$ and $\varphi_{|D}\colon D\to S$  is thus a surjective homomorphism. Therefore,
 \begin{equation}\label{eq:deficiency_kernel}
  \dim\kernel \varphi_{|D}=\dim D-s=m-\ell-s=\delta,
 \end{equation}
 which implies that $\varphi_{|D}$ is an isomorphism if and only if $\delta=0$. It further follows that the deficiency is a non-negative number.
 
 We state here a useful Lemma on the deficiency of subnetworks.
 
 \begin{lemma}\label{lem:subnet}
 Let $\G$ be a reaction network with deficiency $\delta$. Then, the deficiency of any subnetwork of $\G$ is smaller than or equal to $\delta$.
 \end{lemma}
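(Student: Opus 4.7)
My plan is to exploit the geometric characterisation of the deficiency derived in \eqref{eq:deficiency_kernel}, namely $\delta = \dim \kernel \varphi_{|D}$, rather than argue directly from the combinatorial formula $\delta = m - \ell - s$. The reason is that the three individual quantities $m,\ell,s$ do not behave monotonically under passage to a subnetwork: removing reactions can split a linkage class, and complexes may disappear at the same time, so tracking the signed combination $m-\ell-s$ by hand is awkward.

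For a subnetwork $\G' = (\Sp', \C', \R')$ with $m'$ complexes, let $\varphi'\colon \RR^{m'} \to \RR^n$ and $D'$ be defined for $\G'$ in complete analogy with the definitions for $\G$. The inclusion $\C' \subseteq \C$ (granted by the definition of a subnetwork) allows me to identify $\RR^{m'}$ canonically with the coordinate subspace $\spann(e_y : y\in\C')$ of $\RR^m$. Under this identification $\varphi'$ is literally the restriction of $\varphi$, since both maps send $e_y$ to $y$, and because $\R' \subseteq \R$ the generators $d_{y\to y'} = e_{y'}-e_y$ of $D'$ are a subset of the generators of $D$, so $D' \subseteq D$.

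With these embeddings in place the proof collapses to a one-line comparison of kernels:
\begin{equation*}
\kernel \varphi'_{|D'} \;=\; \kernel\varphi \cap D' \;\subseteq\; \kernel\varphi \cap D \;=\; \kernel \varphi_{|D},
\end{equation*}
whence $\delta' = \dim \kernel \varphi'_{|D'} \leq \dim \kernel \varphi_{|D} = \delta$. The only point that genuinely needs a sentence of justification is that the identification $\RR^{m'} \hookrightarrow \RR^m$ is well defined, i.e.\ that every complex of $\G'$ is indeed a complex of $\G$; this is built into the definition of a subnetwork. I therefore do not anticipate any real obstacle: once the kernel viewpoint of \eqref{eq:deficiency_kernel} is adopted, the monotonicity of $\delta$ under taking subnetworks becomes an elementary linear-algebraic observation.
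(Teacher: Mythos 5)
Your proposal is correct and follows essentially the same route as the paper: both invoke the kernel characterisation $\delta=\dim\kernel\varphi_{|D}$ from \eqref{eq:deficiency_kernel} together with the inclusion $D'\subseteq D$ to conclude $\delta'=\dim\kernel\varphi_{|D'}\leq\dim\kernel\varphi_{|D}=\delta$. The only difference is that you make explicit the identification of $\RR^{m'}$ with the coordinate subspace $\spann(e_y\colon y\in\C')$ of $\RR^m$, a step the paper leaves implicit by writing $\varphi_{|D'}$ directly.
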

 \begin{proof}
  Let $\R'\subseteq\R$ and let $\G'$ be the corresponding subnetwork with deficiency $\delta'$. 
  Further, let $D'$ and $S'$ be the equivalent of $D$ and $S$ for $\G'$, respectively. By \eqref{eq:deficiency_kernel} and since $D'$ is a subspace of $D$, we have  $\delta'=\dim\kernel \varphi_{|D'}\leq \dim\kernel \varphi_{|D}=\delta,$
  which concludes the proof.
 \end{proof}

 We next state two classical results which elucidate the connection between complex balanced systems and deficiency zero systems. A proof of the first and of the second result can be found in \cite{horn:general_mass_action} and in \cite{feinberg:review}, respectively. The results draw a connection between graphical and dynamical properties of a network. Theorem \ref{thm:equilibrium_boundary} is given here in a  wider formulation than in \cite{feinberg:review} (see Appendix \ref{sec:proofs} for a proof). 

 \begin{theorem}\label{thm:deficiency_zero_iff}
  The mass-action system $(\G,\kappa)$ is complex balanced for any choice of $\kappa$ if and only if $\G$ is weakly reversible and its deficiency is zero.
 \end{theorem}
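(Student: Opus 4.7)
The plan is to rewrite, for mass-action kinetics, the complex balance equation \eqref{eq:cb_mass-action} as
$$\Phi(c,\kappa) := \sum_{y\to y'\in\R}\kappa_{y\to y'}\, c^y\, d_{y\to y'} = 0 \quad \text{in } \RR^m,$$
and observe that the mass-action equilibrium condition for \eqref{eq:ODE_mass_action} is exactly $\varphi(\Phi(c,\kappa)) = 0$. Since $\Phi(c,\kappa) \in D$ by construction, \eqref{eq:deficiency_kernel} says precisely that $\delta = 0$ is the obstruction preventing a positive equilibrium from being complex balanced.

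For the direction $\Leftarrow$, assume $\G$ is weakly reversible with $\delta = 0$, and fix $\kappa$ arbitrary. First, I would apply a Matrix-Tree style argument to each linkage class (each strongly connected by weak reversibility), weighted by $\kappa$, to produce a strictly positive vector $\mu\in\RR^m_+$ indexed by $\C$ satisfying $\sum_{y\to y'\in\R}\kappa_{y\to y'}\mu_y\, d_{y\to y'} = 0$. Next, I want to realise $\mu_y$ (up to a per-linkage-class multiplicative constant) as $c^y$ for some $c\in\RR^n_+$; taking logarithms, this reduces to finding $u = \log c\in\RR^n$ with $(y'-y)\cdot u = \log\mu_{y'} - \log\mu_y$ for every $y\to y'\in\R$. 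This linear system is dual to $\varphi_{|D}$, which by \eqref{eq:deficiency_kernel} is an isomorphism onto $S$ when $\delta = 0$, so its transpose is surjective and the required $u$ exists. The resulting $c$ then satisfies $\Phi(c,\kappa) = 0$, exhibiting a positive complex balanced equilibrium.

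For the direction $\Rightarrow$, weak reversibility follows immediately from Theorem \ref{thm:complex_balanced}. To conclude $\delta = 0$, I would argue by contradiction. If $\delta > 0$, pick a nonzero $v\in\kernel\varphi_{|D}$ and write $v = \sum_{y\to y'\in\R} b_{y\to y'} d_{y\to y'}$. Fix any $\kappa_0$ together with a positive complex balanced equilibrium $c_0$ (guaranteed by hypothesis), and define the perturbation $\kappa^s_{y\to y'} = (\kappa_0)_{y\to y'} + s\,b_{y\to y'}/c_0^y$, which remains in $\RR^k_+$ for $|s|$ sufficiently small. A direct computation gives $\Phi(c_0,\kappa^s) = sv$, so $\varphi(\Phi(c_0,\kappa^s)) = s\varphi(v) = 0$ and $c_0$ is a positive equilibrium of $(\G,\kappa^s)$. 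However, $\Phi(c_0,\kappa^s) = sv \neq 0$ for $s\neq 0$ means $c_0$ fails \eqref{eq:cb_mass-action}, contradicting the second assertion of Theorem \ref{thm:complex_balanced}, since $(\G,\kappa^s)$ is complex balanced by hypothesis and hence every one of its positive equilibria ought to be complex balanced.

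The hard part is producing the strictly positive kernel element $\mu$ in the $\Leftarrow$ direction; this is a graph-theoretic input (the Matrix-Tree theorem applied to the weighted Laplacian of each linkage class) and is the only non-linear-algebraic ingredient. Once $\mu$ is in hand, both directions are driven entirely by the structural identity \eqref{eq:deficiency_kernel} and, for the converse, by the perturbation argument combined with Theorem \ref{thm:complex_balanced}.
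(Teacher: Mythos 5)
Your proposal is correct, but note that the paper does not actually prove Theorem \ref{thm:deficiency_zero_iff}: it is quoted as a classical result, with the proof attributed to \cite{horn:general_mass_action} (see also \cite{feinberg:review}), so the comparison is with the classical literature rather than with an in-paper argument. Your sufficiency direction is the now-standard proof: weak reversibility makes each linkage class strongly connected, the Matrix--Tree/Perron--Frobenius step yields a strictly positive vector $\mu$ with $\sum_{y\to y'\in\R}\kappa_{y\to y'}\mu_y d_{y\to y'}=0$, and $\delta=0$ enters exactly where you say, since the prescribed increments $\log\mu_{y'}-\log\mu_y$ define a linear functional on $D$ (pairing with $\log\mu\in\RR^m$, hence automatically well defined), and injectivity of $\varphi_{|D}$ makes its transpose surjective, producing $u=\log c$. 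Two small points are worth spelling out: solving $(y'-y)\cdot u=\log\mu_{y'}-\log\mu_y$ only gives $c^{y}=\gamma_L\mu_y$ with a constant $\gamma_L$ depending on the linkage class $L$, so you need the (trivial but necessary) remark that the weighted Laplacian is block diagonal over linkage classes, whence the rescaled vector still gives $\Phi(c,\kappa)=0$; and strict positivity of $\mu$ is precisely where weak reversibility is used, as you flag. Your necessity direction is a clean perturbation argument: $\Phi(c_0,\kappa^s)=sv\in\kernel\varphi_{|D}\setminus\{0\}$ keeps $c_0$ a positive equilibrium of $(\G,\kappa^s)$ while violating \eqref{eq:cb_mass-action}, contradicting the assertion of Theorem \ref{thm:complex_balanced} that all equilibria of a complex balanced mass-action system are complex balanced; since only the positive-equilibrium part of that theorem is needed and its appendix proof does not rely on Theorem \ref{thm:deficiency_zero_iff}, there is no circularity. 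What your route buys is a self-contained proof whose only non-linear-algebraic input is the Matrix--Tree positivity statement; what the citation buys the paper is brevity, since Horn's original argument (and Feinberg's exposition) covers the same ground.
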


 \begin{theorem}\label{thm:equilibrium_boundary}
  Consider a deterministic reaction system $(\G,K)$, and assume that the deficiency of $\G$ is zero. If $x\in\RR_0^n$ is an equilibrium point and $y\to y'\in\R$, then $\supp y\subseteq \supp x$ only if $y\to y'$ is terminal.
  Moreover, if $K$ is mass-action kinetics with rate constants $\kappa$ and $\supp y\subseteq \supp x$, then the projection of $x$ onto the species space of $\G_y$ is a complex balanced equilibrium of $(\G_y,\kappa_y)$.
 \end{theorem}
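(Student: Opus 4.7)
The plan is to lift the ODE equilibrium condition into the complex space using the map $\varphi$ of \eqref{eq:def_phi}, which, as in \eqref{eq:deficiency_kernel}, is injective on $D$ precisely because $\delta=0$. Write $\R_x=\{y\to y'\in\R:\supp y\subseteq\supp x\}$ for the reactions active at $x$; by the rate-function hypothesis these are exactly the reactions with $\lambda_{y\to y'}(x)>0$. The equilibrium condition $\sum_{y\to y'\in\R_x}\lambda_{y\to y'}(x)(y'-y)=0$ then reads $\varphi(v)=0$ for $v=\sum_{y\to y'\in\R_x}\lambda_{y\to y'}(x)\,d_{y\to y'}\in D$, so $v=0$. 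Extracting the coefficient of each $e_{\tilde y}$ yields the per-complex balance
\[
\sum_{y'\to\tilde y\in\R_x}\lambda_{y'\to\tilde y}(x)\;=\;\sum_{\tilde y\to y'\in\R_x}\lambda_{\tilde y\to y'}(x)\qquad\text{for every }\tilde y\in\C.
\]

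From this balance I would extract two structural facts. (i) If $\supp\tilde y\not\subseteq\supp x$ then no reaction out of $\tilde y$ is active, so the balance forces no active reaction into $\tilde y$; contrapositively, activity propagates forward: $\supp y\subseteq\supp x$ and $y\to y'\in\R$ imply $\supp y'\subseteq\supp x$. (ii) A positive balanced flow on a finite digraph places every edge on a directed cycle: for an active edge $u\to v$, let $R$ be the set of nodes reachable from $v$ via $\R_x$; by definition no active edge leaves $R$, so summing the balance over $R$ shows no active edge enters $R$ either, whence $u\in R$, giving a cycle through $u\to v$.

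For the first assertion of the theorem, the edge $y\to y'\in\R_x$ lies on an $\R_x$-cycle by (ii), so $y'$ belongs to the $\G$-SCC $S$ of $y$. If some reaction $\hat y\to\hat y'$ left $S$ in $\G$, then $\hat y$ would be reachable from $y$; iterating (i) would give $\supp\hat y,\supp\hat y'\subseteq\supp x$, so $\hat y\to\hat y'\in\R_x$, and (ii) would force $\hat y'\in S$, a contradiction. Hence $S$ is terminal and $y\to y'$ is a terminal reaction. For the second assertion, every complex $\tilde y\in\C_y$ is reachable from $y$ in $\G$, so $\supp\tilde y\subseteq\supp x$ by (i); in particular $\Sp_y\subseteq\supp x$ and the projection $x_y$ is positive. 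Any active reaction into $\C_y$ lies on an $\R_x$-cycle which, by terminality of $\C_y$, must stay inside $\C_y$, so its source lies in $\C_y$. The per-complex balance at each $\tilde y\in\C_y$ therefore involves only reactions in $\R_y$, and since $x^{y}=(x_y)^{y}$ for complexes $y\in\C_y$ supported in $\Sp_y$, under mass-action it reads exactly as the complex-balance condition \eqref{eq:cb_mass-action} for $(\G_y,\kappa_y)$ evaluated at $x_y$.

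The conceptual obstacle is the forward propagation (i): the rate-function hypothesis $\lambda_{y\to y'}(x)>0\Leftrightarrow\supp y\subseteq\supp x$ only constrains the source, so a priori nothing is known about $\supp y'$. Deficiency zero is exactly the ingredient that turns the stoichiometric-balance equation into a per-complex balance, whose vanishing at support-deficient complexes then bootstraps the forward propagation.
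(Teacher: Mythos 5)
Your proof is correct, but it takes a genuinely different route from the paper's. The paper's proof outsources the first assertion (and the fact that support propagates to all of $\C_y$) to Feinberg's Theorem 6.1.2, and for the second assertion it forms the subnetwork $\widetilde{\R}=\{\widetilde y\to\widetilde y'\in\R:\supp\widetilde y\subseteq\supp x\}$, notes it is weakly reversible (a union of terminal strongly connected components) and of deficiency zero by Lemma \ref{lem:subnet}, and then invokes Theorem \ref{thm:deficiency_zero_iff} to conclude that the positive projected equilibrium is complex balanced, finally restricting to $(\G_y,\kappa_y)$. You instead argue from first principles: since $\delta=0$ makes $\varphi_{|D}$ injective (as in \eqref{eq:deficiency_kernel}), the equilibrium condition lifts to a per-complex balance of the active flow in $\RR^m$; the standard argument that a positive balanced flow places every edge on a directed cycle then yields both the forward propagation of support and the terminality of active reactions (so you are essentially reproving the cited Feinberg result rather than quoting it), and the per-complex balance at the complexes of $\C_y$, which you correctly show involves only reactions of $\R_y$, is literally \eqref{eq:cb_mass-action} for $(\G_y,\kappa_y)$ at $x_y$ -- no appeal to Theorem \ref{thm:deficiency_zero_iff} is needed. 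What each approach buys: the paper's proof is short because it leans on the classical deterministic results; yours is self-contained, makes transparent that the first assertion holds for arbitrary kinetics, and exposes the common mechanism (deficiency zero turning stoichiometric balance into complex-wise balance) behind both cited theorems. The only cosmetic omission is the one-line remark that a positive point satisfying \eqref{eq:cb_mass-action} for $(\G_y,\kappa_y)$ is automatically an equilibrium of that system (summing the complex-balance relations over $\C_y$ makes the right-hand side of \eqref{eq:ODE_mass_action} vanish), which is needed to match the paper's definition of a complex balanced equilibrium.
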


It follows from Theorem \ref{thm:equilibrium_boundary} that an equilibrium point satisfies \eqref{eq:cb_mass-action} for the terminal system, though it is not necessarily a positive equilibrium of  $(\G^*,\kappa^*)$.
The deficiency zero theorem, in the following formulation, is a consequence of the three previous theorems:

 \begin{theorem}[Deficiency zero theorem]\label{thm:deficiency_zero}
 Consider a deterministic reaction system $(\G,K)$  for which the deficiency is zero. Then the following statements hold:
  \begin{enumerate}[i)]
   \item if $\G$ is not weakly reversible, then there exists no positive equilibria;
   \item if $\G$ is weakly reversible and $K$ is mass-action kinetics, then there exists within each stoichiometric compatibility class a unique positive equilibrium, which is asymptotically stable.
  \end{enumerate}
 \end{theorem}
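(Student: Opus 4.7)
The plan is to derive both parts directly from Theorems \ref{thm:complex_balanced}, \ref{thm:deficiency_zero_iff}, and \ref{thm:equilibrium_boundary}, as the statement is essentially a corollary combining them. The only care needed is to track which hypothesis (general kinetics vs.\ mass-action) is being invoked at each step.

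For part (i), I would argue by contrapositive: assume $c\in\RR^n_+$ is a positive equilibrium of $(\G,K)$. Since $c$ is strictly positive, $\supp y \subseteq \supp c = \{1,\dots,n\}$ for every reaction $y\to y'\in\R$. Because $\delta=0$, Theorem \ref{thm:equilibrium_boundary} applies (note that this theorem is formulated for arbitrary kinetics, which is exactly what we need here) and forces every $y\to y'\in\R$ to be terminal. Thus $\R=\R^*$, which by definition means $\G$ is weakly reversible, contradicting the hypothesis of (i).

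For part (ii), assume $\G$ is weakly reversible, $K$ is mass-action with rate constants $\kappa$, and $\delta=0$. Theorem \ref{thm:deficiency_zero_iff} then yields that $(\G,\kappa)$ is complex balanced (in fact, for every choice of $\kappa$). Invoking the mass-action portion of Theorem \ref{thm:complex_balanced}, we conclude that each stoichiometric compatibility class contains exactly one positive equilibrium and that it is locally asymptotically stable, which is precisely the conclusion of (ii).

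The proof contains no real obstacle once the three prior theorems are available; the work is essentially bookkeeping. The only subtle point worth flagging is the asymmetry in hypotheses: part (i) needs Theorem \ref{thm:equilibrium_boundary} in its general-kinetics form, while part (ii) genuinely requires mass-action so that Theorems \ref{thm:deficiency_zero_iff} and \ref{thm:complex_balanced} can be chained. No additional estimates, Lyapunov arguments, or structural lemmas are required at this stage.
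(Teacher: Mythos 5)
Your proof is correct and follows essentially the same route as the paper, which simply notes that Theorem \ref{thm:deficiency_zero} is a consequence of Theorems \ref{thm:complex_balanced}, \ref{thm:deficiency_zero_iff} and \ref{thm:equilibrium_boundary}: part (i) via the general-kinetics half of Theorem \ref{thm:equilibrium_boundary} (a positive equilibrium forces every reaction to be terminal, i.e.\ weak reversibility), and part (ii) by chaining Theorem \ref{thm:deficiency_zero_iff} with the mass-action part of Theorem \ref{thm:complex_balanced}. No gaps; your flagging of the general-kinetics versus mass-action asymmetry is exactly the right bookkeeping.
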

 
 The original formulation is richer than the one presented here \cite{feinberg:review}.
 
\section{Stochastic reaction systems}
 
\subsection{Classification of states and sets}

\begin{figure}
 \begin{center}
   \includegraphics[width=0.8\textwidth]{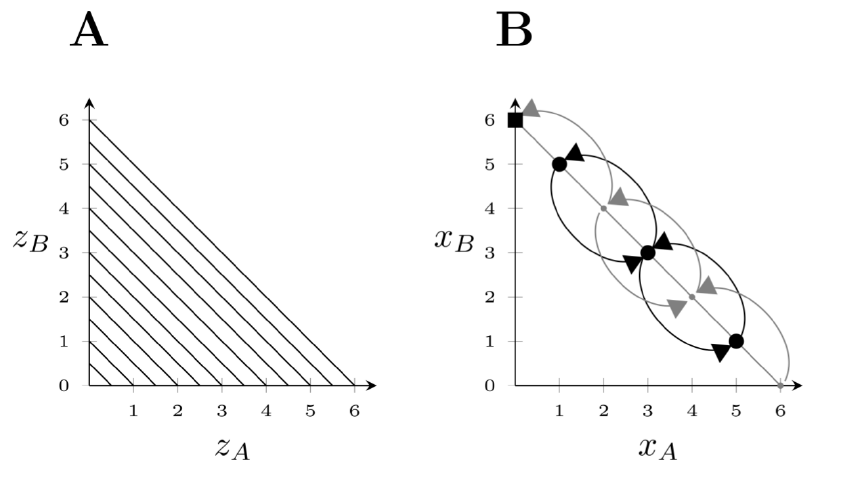}
  \caption{The figure shows some features of the reaction network $2A\to2B$, and $A+3B\to3A+B$. (A) The stoichiometric compatibility classes are  of the form $\{(z_A,z_B)\colon z_A+z_B=\mathrm{const.}\}$. (B) The two irreducible components on $\{(x_A,x_B)\colon x_A+x_B=6\}$ are shown (black circles and square), together with the possible transitions between the  states. All states within a component are accessible from each other. The ``square'' component has no active reactions, both reactions are active on the ``black circles'' component. The grey states are transient states which are not in any irreducible component.
  }
   \label{figure}
 \end{center}
\end{figure} 

 To characterise the stochastic dynamics we introduce the following terminology.
 \begin{definition}
  Let $\G=(\Sp,\C,\R)$ be a reaction network. 
  \begin{enumerate}[a)]
   \item A reaction $y\to y'\in\R$ is \emph{active} on $x\in\NN^n$ if $x\geq y$.
   \item A state $u\in\NN^n$ is \emph{accessible} from a state $x\in\NN^n$ if there is a sequence of $q\ge 0$ reactions $(y_j\to y_j')_{j=1,\ldots,q}$ such that
    \begin{enumerate}
     \item[(i)] $u=x+\sum_{j=1}^q (y'_j-y_j)$,
     \item[(ii)] $y_h\to y_h'$ is active on $x+\sum_{j=1}^{h-1}(y'_j-y_j)$ for all $1< h\leq q$.
    \end{enumerate}
  \end{enumerate}
 \end{definition} 
 \begin{definition}
  Let $\G$ be a reaction network. A non-empty set $\Gamma\subseteq\NN^n$ is an \emph{irreducible component} of $\G$ if for all $x\in\Gamma$ and all $u\in\NN^n$, $u$ is accessible from $x$ if and only if $u\in\Gamma$.
 \end{definition}

\begin{definition} 
 A reaction network $\G$ is \emph{essential} if the state space is a union of irreducible components. A reaction network $\G$ is \emph{almost essential} if the state space is a union of irreducible components except for a finite  number of states.
\end{definition}

An essential network is also almost essential. A weakly reversible reaction network is essential \cite{craciun:dynamical}. Conditions for being essential can be found in \cite{craciun:dynamical, gupta:determining}.  Any irreducible component is  contained in some stoichiometric compatibility class, and a stoichiometric compatibility class may contain several irreducible components (Fig.\ $1B$).
 
\subsection{Stationary distribution}

 The stationary distribution $\pi_\Gamma$ on an irreducible component $\Gamma$ is unique, if it exists. It is  characterised by the \emph{master equation} \cite{anderson:design}:
 \begin{equation}\label{eq:master}
  \sum_{y\to y'\in\R}\pi_\Gamma(x+y-y')\lambda_{y\to y'}(x+y-y') = \pi_\Gamma(x)\sum_{y\to y'\in\R}\lambda_{y\to y'}(x),  
 \end{equation}
 for all $x\in\Gamma$. Let $X(t)$ denote the stochastic process associated with the system.  If  $X(t_0)$ follows the law of $\pi_\Gamma$ at time $t_0$, then the distribution of $X(t)$ is $\pi_\Gamma$ for all future times $t\geq t_0$. In this sense, the stationary distribution describes a state of equilibrium of the system. Moreover, if $\pi_\Gamma$ exists, then
 \begin{equation}\label{eq:limit_stationary}
  \lim_{t\to\infty} P(X(t)\in A)=\pi_\Gamma(A)\quad\text{for any }A\subseteq\Gamma,
 \end{equation}
 provided that $X(0)\in\Gamma$ with probability one. As discussed in Section \ref{sec:introduction}, a connection between mass-action complex balanced systems and their stationary distribution has been made in \cite{anderson:product-form}:
 \begin{theorem}\label{thm:anderson}
  Let $(\G,\kappa)$ be a complex balanced mass-action system. Then, there exists a unique stationary distribution on every irreducible component $\Gamma$, and it is of the form
  \begin{equation}\label{eq:definition_stat_prob}
   \pi_\Gamma(x)=M^{c}_\Gamma\prod_{i=1}^n\frac{c_i^{x_i}}{x_i!}\quad\text{for }x\in\Gamma,
  \end{equation}
  where $c$ is a positive complex balanced equilibrium of $(\G,\kappa)$ and $M^{c}_\Gamma$ is a normalising constant.
 \end{theorem}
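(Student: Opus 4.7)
The plan is to verify directly that the proposed product-form $\pi_\Gamma(x) = M_\Gamma^c \prod_{i} c_i^{x_i}/x_i!$ satisfies the master equation \eqref{eq:master}, and then to invoke uniqueness from the irreducibility of the chain on $\Gamma$. Theorem \ref{thm:complex_balanced} guarantees the existence of the positive complex balanced equilibrium $c$, so the candidate is well-defined.

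First I would check that $M_\Gamma^c$ is well-defined, i.e.\ that the unnormalised sum $\sum_{x\in\Gamma} c^x/x!$ is finite. Since $\Gamma\subseteq \NN^n$, this sum is bounded above by $\sum_{x\in\NN^n} c^x/x! = \exp(c_1+\cdots+c_n) < \infty$, so $\pi_\Gamma$ is a bona fide probability distribution on $\Gamma$.

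The core step is plugging $\pi_\Gamma$ and the stochastic mass-action rate $\lambda_{y\to y'}(x) = \kappa_{y\to y'} x!/(x-y)! \mathbbm{1}_{\{x\ge y\}}$ into \eqref{eq:master}. On the left side, the summand for reaction $y\to y'$ becomes
\[
M_\Gamma^c\, \kappa_{y\to y'}\, \frac{c^{x+y-y'}}{(x-y')!}\, \mathbbm{1}_{\{x\ge y'\}},
\]
after the factorials in $\pi_\Gamma(x+y-y')$ and $\lambda_{y\to y'}(x+y-y')$ cancel. Re-indexing by the \emph{product} complex $\eta = y'$ and pulling out the factor $c^x/(x-\eta)! \cdot \mathbbm{1}_{\{x\ge\eta\}}\cdot c^{-\eta}$, the left-hand side rewrites as
\[
M_\Gamma^c\, c^x \sum_{\eta\in\C} \frac{\mathbbm{1}_{\{x\ge\eta\}}}{(x-\eta)!}\, c^{-\eta} \sum_{y\colon y\to\eta\in\R} \kappa_{y\to\eta}\, c^{y}.
\]
An analogous manipulation of the right-hand side, indexed instead by the \emph{source} complex $\eta = y$, yields
\[
M_\Gamma^c\, c^x \sum_{\eta\in\C} \frac{\mathbbm{1}_{\{x\ge\eta\}}}{(x-\eta)!}\, c^{-\eta} \sum_{y'\colon \eta\to y'\in\R} \kappa_{\eta\to y'}\, c^{\eta}.
\]
Equality then follows term-by-term in $\eta$ from the complex balance identity \eqref{eq:cb_mass-action} evaluated at $c$. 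The one thing that requires care is ensuring that restricting the sum to those $y\to y'\in\R$ active on the correct state (encoded by the indicator $\mathbbm{1}_{\{x\ge\eta\}}$) does not destroy the grouping: because the cancellations occur complex-by-complex and the indicator depends only on $\eta$, not on the other endpoint of the reaction, the rearrangement is legitimate without any positivity assumption on $x$.

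Having verified \eqref{eq:master}, uniqueness is immediate: the continuous-time Markov chain restricted to the irreducible component $\Gamma$ is, by definition, irreducible, so its stationary distribution, if it exists, is unique. Hence $\pi_\Gamma$ is the stationary distribution on $\Gamma$. I expect the only delicate point to be the bookkeeping in the master equation rearrangement—in particular, making sure the indicator functions and factorial shifts are handled consistently on boundary states where some $x_i$ may be small.
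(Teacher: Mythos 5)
The paper does not actually prove this statement; Theorem \ref{thm:anderson} is imported from \cite{anderson:product-form}, and your direct verification of the master equation --- plug in the product form, cancel factorials, group the incoming terms by product complex and the outgoing terms by source complex, and close with the complex balance identity \eqref{eq:cb_mass-action} at $c$ --- is essentially the Anderson--Craciun--Kurtz argument. The algebraic bookkeeping is correct, the normalisability bound $\sum_{x\in\Gamma}c^x/x!\le e^{c_1+\cdots+c_n}$ is fine, and uniqueness from irreducibility of the chain on $\Gamma$ is fine.

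There is, however, one genuine gap, located exactly where you declare the argument unproblematic. In \eqref{eq:master}, $\pi_\Gamma$ is a distribution on $\Gamma$ (zero off $\Gamma$), but you substitute the product-form value $M^c_\Gamma\, c^{x+y-y'}/(x+y-y')!$ for \emph{every} reaction with $\mathbbm{1}_{\{x\ge y'\}}=1$. This is legitimate only if $x\in\Gamma$ and $x\ge y'$ force $x+y-y'\in\Gamma$; otherwise your left-hand side acquires strictly positive terms that do not belong to the balance equation of the chain restricted to $\Gamma$, and the identity you verify is not the stationarity condition. (For the network $A\to B$ and the component $\Gamma=\{(0,1)\}$ one has $x\ge y'=B$ while $x+y-y'=(1,0)\notin\Gamma$, so the substitution genuinely fails in general; it is not a matter of indicator or factorial bookkeeping, which is the only issue you flag.) The missing step is supplied by structure theory you never invoke: complex balancing implies $\G$ is weakly reversible (Theorem \ref{thm:complex_balanced}), and then, given $x\in\Gamma$ with $x\ge y'$ and $y\to y'\in\R$, a directed path from $y'$ back to $y$ together with Lemma \ref{lem:path} shows that $x+y-y'$ is accessible from $x$ and hence lies in $\Gamma$ (this is the content of Lemma \ref{lem:reactions_gamma}). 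With that observation added, every state whose product-form value you use is indeed in $\Gamma$, and your verification is complete.
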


\subsection{Parallel theorems for stochastic mass-action systems}

In this section we  derive  stochastic statements corresponding to  Theorem \ref{thm:complex_balanced}-\ref{thm:deficiency_zero}. Some of the proofs are deferred to Appendix \ref{sec:proofs}. We begin with a definition.

 \begin{definition}
   For an irreducible component $\Gamma$, the set $\R_\Gamma$ of \emph{active reactions} on $\Gamma$ consists of the reactions $y\to y'\in\R$ that are active on some $x\in\Gamma$. The subnetwork $\G_\Gamma=(\Sp_\Gamma,\C_\Gamma,\R_\Gamma)$ is called the \emph{$\Gamma$-network} of $\G$ and the subsystem $(\G_\Gamma,K_{\Gamma})$ of $(\G,K)$ is called the \emph{$\Gamma$-system} of $(\G,K)$.
 \end{definition}
 
The reactions that are active on $\Gamma$ determine the dynamics of the stochastic system on $\Gamma$. To study the stationary distributions, it is therefore convenient to analyse the $\Gamma$-systems. Note that $\R_\Gamma$ is empty if and only if $\Gamma$ consists of a single state.
 
 As an example, consider the  deficiency zero network, 
 $$C\rightleftharpoons D,\quad 2A\rightleftharpoons 2B,\quad A\to 0.$$
 All  molecules of $A$ and $B$ are irreversibly consumed through $A\to0$ and $2B\to 2A$, thus the only active reactions on an irreducible component $\Gamma\not=\{0\}$ are $C\rightleftharpoons D$. 
 The $\Gamma$-network is therefore $C\rightleftharpoons D$, which differs from the terminal system $C\rightleftharpoons D$, $2A\rightleftharpoons 2B$.
 The next proposition states that for a deficiency zero reaction network $\R_\Gamma\subseteq\R^*$ for any irreducible component $\Gamma$. Note that Proposition \ref{prop:terminal_strong_linkage_classes} does not hold in general, for example,
 $$A\to B,\quad 2B\to 2A$$
 has $\R_{\Gamma}=\R$ for any $\Gamma\neq\{0\}, \{(0,1)\}$, while $\R^*=\emptyset$.
 \begin{proposition}\label{prop:terminal_strong_linkage_classes}
  Let $\G$ be a reaction network and $\Gamma$ an irreducible component such that $\G_\Gamma$ has deficiency zero. Then, $\G_\Gamma$ is a subnetwork of $\G^*$. In particular, this is true if the deficiency of $\G$ is zero.
 \end{proposition}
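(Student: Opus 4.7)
The plan is to show that every reaction in $\R_\Gamma$ is terminal in $\G$. A useful first observation is that $\R_\Gamma$ is closed under successors: if $y \to y' \in \R_\Gamma$ (so it is active on some $x \in \Gamma$) and $y' \to y'' \in \R$, then $x + y' - y \in \Gamma$ and $x + y' - y \geq y'$, so $y' \to y'' \in \R_\Gamma$ as well. By induction, any directed path in the reaction graph of $\G$ beginning with $y \to y'$ lies entirely inside $\R_\Gamma$. Thus it suffices to prove that $y \to y'$ is terminal inside $\G_\Gamma$, i.e.\ every directed path in the reaction graph of $\G_\Gamma$ starting with $y \to y'$ extends to a closed directed walk.

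Next I would fix such a path $y = z_0 \to z_1 \to \cdots \to z_k$ in $\G_\Gamma$ and construct, along the lines above, a sequence of states $x + z_i - y \in \Gamma$ realising it stochastically. Irreducibility of $\Gamma$ then yields a sequence of reactions $w_1 \to w_1', \ldots, w_p \to w_p'$ that returns the process from $x + z_k - y$ to $x$; each of these reactions is active on some state in $\Gamma$ and hence lies in $\R_\Gamma$. By construction the total stoichiometric change vanishes, so the formal sum
\[
\sum_{i=1}^{k} d_{z_{i-1}\to z_i} + \sum_{j=1}^{p} d_{w_j\to w_j'}
\]
lies in $\kernel \varphi_{|D_{\G_\Gamma}}$. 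Because $\G_\Gamma$ has deficiency zero, equation \eqref{eq:deficiency_kernel} (applied to $\G_\Gamma$) tells us this kernel is trivial, so the sum is zero in $D_{\G_\Gamma}$.

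Writing each $d$-vector as $e_{\text{target}} - e_{\text{source}}$, the vanishing of the formal sum translates into flow conservation in the reaction graph of $\G_\Gamma$: for the multiset consisting of the forward-path edges and the return edges, every complex has equal in-degree and out-degree. Subtracting the forward-path edges leaves a submultiset in which the only unbalanced complexes are $z_k$ (out-excess $1$) and $y$ (in-excess $1$); a standard cut/reachability argument then produces a directed walk from $z_k$ back to $y$ inside this residual multiset, closing the original path. This establishes that $y \to y'$ is terminal, hence $\R_\Gamma \subseteq \R^*$. The second assertion follows immediately from Lemma \ref{lem:subnet}: if $\delta(\G) = 0$, then $\delta(\G_\Gamma) \leq 0$, and since deficiency is non-negative we have $\delta(\G_\Gamma) = 0$.

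I expect the main obstacle to be the graph-theoretic step that turns the algebraic flow-conservation identity into an actual extending walk from $z_k$ to $y$ in the reaction graph of $\G_\Gamma$. The algebra only says the $e_y$-coefficients cancel, not that edges fit together into a walk, so one has to argue carefully (via, e.g., reachable sets and a cut argument) that the residual multiset of edges, balanced everywhere except at $\{y, z_k\}$, actually admits a directed walk from $z_k$ to $y$ rather than just decomposing into disjoint closed pieces avoiding $y$.
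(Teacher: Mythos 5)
Your argument is correct, and its ingredients are the same three the paper uses: closure of $\R_\Gamma$ under successors along directed paths, a return sequence supplied by irreducibility of $\Gamma$, and triviality of $\kernel\varphi_{|D}$ for the $\Gamma$-network via \eqref{eq:deficiency_kernel}. The packaging differs, though. The paper factors the work through a separate lemma stating that a deficiency-zero $\G_\Gamma$ is weakly reversible: it applies the kernel argument to a \emph{single} reaction $y\to y'$ plus a return sequence, obtaining a fully balanced edge multiset that it decomposes into closed directed cycles, one of which contains $y\to y'$; the proposition is then the two-line combination of weak reversibility of $\G_\Gamma$ with the successor-closure observation (which also makes that lemma reusable elsewhere in the paper). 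You instead run the kernel argument once per arbitrary path $z_0\to\cdots\to z_k$, which leaves a multiset balanced everywhere except for an out-excess at $z_k$ and an in-excess at $y$, and you must then extract a connecting walk from $z_k$ to $y$ rather than a cycle decomposition. The step you flag as the main obstacle is handled correctly by your own sketch: if $R$ is the set of complexes reachable from $z_k$ using the residual edges, no residual edge leaves $R$, so summing in-degree minus out-degree over $R$ gives a nonnegative number, contradicting the net $-1$ at $z_k$ unless $y\in R$; so the walk exists and the path closes. Net effect: your route proves terminality directly without an explicit weak-reversibility lemma, at the cost of the slightly more delicate ``almost balanced'' cut argument, whereas the paper's edge-by-edge version only needs the standard cycle decomposition of a balanced multiset. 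Your handling of the second assertion via Lemma \ref{lem:subnet} and non-negativity of the deficiency matches the paper exactly.
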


 See Appendix \ref{sec:proofs} for a proof. Proposition \ref{prop:terminal_strong_linkage_classes} can be useful because $\R_\Gamma$ might be difficult to find, especially if there are  many complexes.  On the other hand, terminal reactions are easily identified by means of the reaction graph. The next definitions are inspired by Definition \ref{def:cb}.
 
  \begin{definition}\label{defi:complex_balanced_stationary_distribution}
  Let $(\G, K)$ be a stochastic reaction system. A stationary distribution $\pi_\Gamma$ on an irreducible component $\Gamma$ is said to be \emph{complex balanced} if
  \begin{equation}\label{eq:stochastic_complex_balanced}
   \sum_{y\in\C_\Gamma}\pi_\Gamma(x-y'+y)\lambda_{y\to y'}(x-y'+y)=\sum_{y\in\C_\Gamma}\pi_\Gamma(x)\lambda_{y'\to y}(x)\quad\forall y'\in\C_\Gamma, x\in\Gamma. 
  \end{equation}
 \end{definition}
  For a mass-action system, \eqref{eq:stochastic_complex_balanced} becomes
  $$\sum_{y\in\C_\Gamma}\pi_\Gamma(x-y'+y)\kappa_{y\to y'}\frac{(x-y'+y)!}{(x-y')!}\mathbbm{1}_{\{x\geq y'\}}=\sum_{y\in\C_\Gamma}\pi_\Gamma(x)\kappa_{y'\to y}\frac{x!}{(x-y')!}\mathbbm{1}_{\{x\geq y'\}}$$
  for any $y'\in\C_\Gamma$ and $x\in\Gamma$, with the convention that $k_{y\to y'}=0$ if $y\to y'\not\in\R_\Gamma$. In developing the theory for complex balanced equilibria in the deterministic setting, an important role is played by requiring positivity of the complex balanced equilibrium. Our aim is to introduce a similar concept for the stochastic systems. In the deterministic setting, if a state $z\in\RR^n$ is positive then every rate function calculated on $z$ is positive. We find inspiration from this to give the next definition:
  \begin{definition}
   An irreducible component $\Gamma$ is \emph{positive} if $\G_\Gamma=\G$.
  \end{definition}
  
  Equivalently, an irreducible component $\Gamma$ is \emph{positive} if all reactions are active on $\Gamma$. The next definition follows naturally by analogy with the deterministic setting.

 \begin{definition}\label{def:stoch_cb}
  A stochastic reaction system $(\G,K)$ is said to be \emph{stochastically complex balanced} if there exists a complex balanced stationary distribution on a positive irreducible component.
 \end{definition}
 
 If $\Gamma$ is positive, then $\C_\Gamma=\C$ and a complex balanced stationary distribution on $\Gamma$ satisfies \eqref{eq:stochastic_complex_balanced} with $\C_\Gamma$ replaced by $\C$.  Note the similarity between Definition \ref{def:stoch_cb} and the definition of a complex balance equilibrium (Definition \ref{def:cb}): the positivity of $\Gamma$ plays the role of the positivity in Definition \ref{def:cb}. Also note the close similarity between \eqref{eq:cb} and \eqref{eq:stochastic_complex_balanced}.
 
 \begin{theorem}\label{thm:stochastic_complex_balanced}
  Let $(\G,K)$ be a stochastic reaction system, and let $\Gamma$ be an irreducible component. If there exists a complex balanced stationary distribution $\pi_\Gamma$ on $\Gamma$ then $\G_\Gamma$ is weakly reversible. Moreover, if $K$ is mass-action kinetics with rate constants $\kappa$, there exists a complex balanced stationary distribution $\pi_\Gamma$ on $\Gamma$ if and only if the $\Gamma$-system of $(\G,\kappa)$ is complex balanced. If this is the case, then $\pi_\Gamma$ has the form
 \begin{equation}\label{eq:def_stat_distr_gamma}
   \pi_\Gamma(x)=M^{c}_\Gamma\prod_{i\colon S_i\in\Sp_\Gamma}\frac{c_i^{x_i}}{x_i!}\quad\text{for }x\in\Gamma,
 \end{equation}
 where $c$ is a positive complex balanced equilibrium of $(\G_\Gamma,\kappa_\Gamma)$ and $M^{c}_\Gamma$ is a normalising constant.
 \end{theorem}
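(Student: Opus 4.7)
My plan is to prove the three assertions in turn: weak reversibility of $\G_\Gamma$ for general kinetics, the mass-action equivalence, and the product-Poisson formula. For the weak-reversibility claim, I define the expected stationary flux $F_{y\to y'}=\sum_{x\in\Gamma}\pi_\Gamma(x)\lambda_{y\to y'}(x)$ for each $y\to y'\in\R_\Gamma$. Since $\pi_\Gamma>0$ on the irreducible component $\Gamma$ and each reaction in $\R_\Gamma$ is active at some $x\in\Gamma$, each such $F_{y\to y'}$ is strictly positive. Summing~\eqref{eq:stochastic_complex_balanced} over $x\in\Gamma$ and making the substitution $w=x-y'+y$ on the left-hand side — valid because $\Gamma$ is closed under active reactions, so the map $w\mapsto w+y'-y$ restricts to a bijection onto the relevant subset of $\Gamma$ — yields the global balance $\sum_{y}F_{y\to y'}=\sum_{y}F_{y'\to y}$ for every $y'\in\C_\Gamma$. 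A standard Eulerian-type argument shows that a positively weighted directed graph satisfying such a Kirchhoff balance at every vertex decomposes into positively weighted directed cycles; in particular every edge belongs to some cycle, so every $y\to y'\in\R_\Gamma$ is terminal and $\G_\Gamma$ is weakly reversible.

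For the ``if'' direction in mass-action together with the form, I appeal to Theorem~\ref{thm:complex_balanced} to obtain a positive CB equilibrium $c$ of $(\G_\Gamma,\kappa_\Gamma)$ and define $\pi_\Gamma$ by~\eqref{eq:def_stat_distr_gamma}; species outside $\Sp_\Gamma$ have frozen counts on $\Gamma$, so the factors omitted from the product are absorbed into $M^c_\Gamma$. Substituting this product form into~\eqref{eq:stochastic_complex_balanced} in mass-action form, the common factor $M^c_\Gamma\,c^{x-y'}/(x-y')!$ cancels (for $x\geq y'$) and the identity collapses to the deterministic CB equation $\sum_{y}\kappa_{y\to y'}c^y=c^{y'}\sum_{y}\kappa_{y'\to y}$, which holds by hypothesis. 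Summing the verified stochastic CB condition over $y'\in\C$ then recovers the master equation~\eqref{eq:master}, confirming stationarity.

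The main obstacle is the ``only if'' direction. Suppose $\pi_\Gamma$ is a CB stationary distribution on $\Gamma$, so $\G_\Gamma$ is weakly reversible by the first step. The strategy is to produce a positive vector $c$ that is a deterministic CB equilibrium of $(\G_\Gamma,\kappa_\Gamma)$: given such a $c$, the ``if'' direction builds a product-Poisson CB stationary distribution from $c$, and uniqueness of the stationary distribution on the irreducible component $\Gamma$ then forces $\pi_\Gamma$ to equal it, delivering both~\eqref{eq:def_stat_distr_gamma} and the required deterministic complex balance. Setting $h(x)=x!\,\pi_\Gamma(x)$ rewrites~\eqref{eq:stochastic_complex_balanced} as the linear functional equation $\sum_{y}\kappa_{y\to y'}h(x+y-y')=h(x)\sum_{y}\kappa_{y'\to y}$ for each $y'\in\C_\Gamma$ and $x\in\Gamma$ with $x\geq y'$, which is satisfied by the log-linear ansatz $h(x)\propto c^x$ precisely when $c$ obeys the deterministic CB identity. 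The technical heart of the argument is to show that the ratios $h(x+v)/h(x)$, for reaction-displacement vectors $v=y'-y$ with $y\to y'\in\R_\Gamma$, depend on $v$ but not on $x\in\Gamma$: this requires the full strength of~\eqref{eq:stochastic_complex_balanced} beyond the master equation alone, combined with a cycle-consistency argument that exploits the weak reversibility of $\G_\Gamma$ to close paths around the linkage classes and extract a well-defined $c$.
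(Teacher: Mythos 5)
The central gap is in the ``only if'' direction, which is the substantive content of the theorem. You reduce it to showing that $h(x)=x!\,\pi_\Gamma(x)$ is log-linear on $\Gamma$, i.e.\ that the ratios $h(x+y-y')/h(x)$ do not depend on $x$, but you do not prove this; you only assert that it follows from the full strength of \eqref{eq:stochastic_complex_balanced} plus a ``cycle-consistency argument''. That is precisely the hard step, and it does not follow from any straightforward cycle argument: for each $y'$, \eqref{eq:stochastic_complex_balanced} only constrains the sum $\sum_{y}\kappa_{y\to y'}h(x+y-y')$ against $h(x)\sum_y\kappa_{y'\to y}$, so individual ratios cannot be isolated, and deducing that every solution of these summed identities has the form $h(x)\propto c^x$ is a genuine rigidity statement. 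The paper's proof handles exactly this point by a construction you would need to replicate or replace: it lifts the dynamics to the auxiliary mass-action network $\hat{\G}_\Gamma$ with reactions $y+S_y\to y'+S_{y'}$, one fictitious species $S_y$ per complex in $\C_\Gamma$; condition \eqref{eq:stochastic_complex_balanced} says exactly that $\hat{\pi}(x,\hat{x})=M\pi_\Gamma(x)$ is stationary for the lifted chain, the lifted network is weakly reversible and has deficiency zero (the vectors $(y,S_y)$ are linearly independent), hence is complex balanced for every choice of rate constants by Theorem \ref{thm:deficiency_zero_iff}, and Theorem \ref{thm:anderson} then forces the lifted stationary distribution --- and hence $\pi_\Gamma$ itself --- to be product-form Poisson with a positive parameter $c$; plugging that form back into the balance at a state $x\geq y'$ yields the deterministic complex balance identities for $(\G_\Gamma,\kappa_\Gamma)$. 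Without this (or an equivalent) argument, your proof of the main equivalence and of \eqref{eq:def_stat_distr_gamma} is incomplete.

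Two smaller points. Your ``if'' direction (direct verification of \eqref{eq:stochastic_complex_balanced} from a positive complex balanced equilibrium $c$ of $(\G_\Gamma,\kappa_\Gamma)$, then summing over $y'$ to recover \eqref{eq:master}) is fine and close to the paper's. Your proof that $\G_\Gamma$ is weakly reversible via stationary fluxes and flow decomposition is a genuinely different route from the paper's (which lifts to a chain on $\Gamma\times\C$ and uses that accessibility is symmetric on the support of a stationary distribution), but it needs the fluxes $F_{y\to y'}=\sum_{x\in\Gamma}\pi_\Gamma(x)\lambda_{y\to y'}(x)$ to be finite: for general kinetics on an infinite $\Gamma$ these sums can diverge (the embedded jump chain need not be positive recurrent even when a stationary distribution exists), in which case the Kirchhoff balance obtained by summing \eqref{eq:stochastic_complex_balanced} over $x\in\Gamma$ reads $\infty=\infty$ and the cycle decomposition gives nothing. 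You would need either a finiteness argument or truncation, or to switch to an argument like the paper's, which avoids flux sums altogether.
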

 
 The proof is in Appendix \ref{sec:proofs}. It is shown in \cite{anderson:product-form} that the stationary distribution $\pi_\Gamma(x)$ is independent of the choice of complex balanced equilibrium $c$ of the $\Gamma$-system, provided that it is positive.
 We are now ready to derive stochastic versions of Theorem \ref{thm:complex_balanced}-\ref{thm:deficiency_zero}. In addition, we will show that a stochastically complexed balanced mass-action system is complex balanced and \emph{vice versa}. Hence, we will show that the deterministic and stochastic systems are intimately connected.
 The next corollary is an analogue of Theorem \ref{thm:complex_balanced}.

 \begin{corollary}\label{cor:stochastic_complex_balanced_positive}
  If a stochastic reaction system $(\G,K)$ is stochastically complex balanced then $\G$ is weakly reversible. Moreover, a mass-action system $(\G,\kappa)$ is stochastically complex balanced if and only if it is complex balanced. If this is case, then on every irreducible component $\Gamma$ there exists a unique stationary distribution $\pi_\Gamma$. Such $\pi_\Gamma$ is a complex balanced stationary distribution and it has the form \eqref{eq:definition_stat_prob}, where $c$ is a positive complex balanced equilibrium of $(\G,\kappa)$.
 \end{corollary}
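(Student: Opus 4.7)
The plan is to combine Theorems \ref{thm:complex_balanced}, \ref{thm:anderson} and \ref{thm:stochastic_complex_balanced}, the key observation being that on a \emph{positive} irreducible component $\Gamma$ one has $\G_\Gamma=\G$ (hence $(\G_\Gamma,\kappa_\Gamma)=(\G,\kappa)$), so any statement about the $\Gamma$-system transfers directly to the whole system.

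For the first claim and the forward direction of the equivalence, suppose $(\G,K)$ is stochastically complex balanced and fix a positive irreducible component $\Gamma$ carrying a complex balanced $\pi_\Gamma$ (Definition \ref{def:stoch_cb}). Positivity gives $\G_\Gamma=\G$, so Theorem \ref{thm:stochastic_complex_balanced} immediately implies that $\G$ is weakly reversible; and if $K$ is mass-action kinetics $\kappa$, the same theorem gives that $(\G,\kappa)=(\G_\Gamma,\kappa_\Gamma)$ is complex balanced.

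For the reverse direction I need to exhibit a positive irreducible component by hand. Assume $(\G,\kappa)$ is complex balanced. Theorem \ref{thm:complex_balanced} then forces $\G$ to be weakly reversible, hence essential, so $\NN^n$ decomposes into irreducible components. I would pick $x_0:=\sum_{y\in\C}y\in\NN^n$; then $x_0\geq y$ for every complex $y$, so every reaction is active at $x_0$, and the irreducible component $\Gamma_0$ containing $x_0$ satisfies $\R_{\Gamma_0}=\R$, i.e.\ $\Gamma_0$ is positive and $(\G_{\Gamma_0},\kappa_{\Gamma_0})=(\G,\kappa)$ is complex balanced. Theorem \ref{thm:stochastic_complex_balanced} then produces a complex balanced stationary distribution $\pi_{\Gamma_0}$, proving that $(\G,\kappa)$ is stochastically complex balanced.

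For the last assertion, let $\Gamma$ be an arbitrary irreducible component. Anderson's Theorem \ref{thm:anderson} gives existence, uniqueness and the product form \eqref{eq:definition_stat_prob} of $\pi_\Gamma$ for any positive complex balanced equilibrium $c$ of $(\G,\kappa)$. To verify that this $\pi_\Gamma$ is complex balanced in the sense of Definition \ref{defi:complex_balanced_stationary_distribution}, I would substitute \eqref{eq:definition_stat_prob} and the mass-action rate expressions into \eqref{eq:stochastic_complex_balanced}; after cancelling the common factor $M^{c}_\Gamma\mathbbm{1}_{\{x\geq y'\}}c^{x-y'}/(x-y')!$, the condition collapses to
\[
 \sum_{y:\,y\to y'\in\R_\Gamma} \kappa_{y\to y'}\,c^y \;=\; c^{y'}\sum_{y:\,y'\to y\in\R_\Gamma}\kappa_{y'\to y}\qquad (y'\in\C_\Gamma),
\]
which is exactly the deterministic complex balance identity \eqref{eq:cb_mass-action} for $(\G_\Gamma,\kappa_\Gamma)$ at $c$. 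The main obstacle I foresee is precisely this step: the hypothesis on $c$ guarantees complex balance only when summed over the full complex set $\C$, whereas here the sums are restricted to $\C_\Gamma$, and removing the reactions in $\R\setminus\R_\Gamma$ could a priori break the balance complex by complex. I would close the gap by noting that $(\G_\Gamma,\kappa_\Gamma)$ has the same stochastic dynamics on $\Gamma$ as $(\G,\kappa)$, so $\pi_\Gamma$ is the unique stationary distribution of $(\G_\Gamma,\kappa_\Gamma)$ on its own positive irreducible component $\Gamma$; applying the $\Leftarrow$ direction of Theorem \ref{thm:stochastic_complex_balanced} to $(\G_\Gamma,\kappa_\Gamma)$ (via the Anderson-type direct verification above) then yields the required restricted complex balance and completes the proof.
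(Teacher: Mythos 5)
Your first two steps are sound and essentially follow the paper's route: positivity of $\Gamma$ gives $(\G_\Gamma,K_\Gamma)=(\G,K)$, so Theorem \ref{thm:stochastic_complex_balanced} yields weak reversibility and the forward implication; and your explicit construction of a positive irreducible component (the component containing $x_0=\sum_{y\in\C}y$, using that complex balance forces weak reversibility and hence essentiality) is a welcome piece of bookkeeping that the paper's own proof leaves implicit. The difficulty is in the final assertion, and there your argument does not close.

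You correctly identify the obstacle — the hypothesis gives complex balance of $c$ summed over all of $\C$ and $\R$, while Definition \ref{defi:complex_balanced_stationary_distribution} requires the identity with sums restricted to $\C_\Gamma$ and $\R_\Gamma$ — but your proposed repair is circular. Invoking the ``$\Leftarrow$'' direction of Theorem \ref{thm:stochastic_complex_balanced} for $(\G_\Gamma,\kappa_\Gamma)$ presupposes that the $\Gamma$-system is complex balanced, which is exactly the restricted identity you are trying to establish; invoking the ``$\Rightarrow$'' direction instead presupposes that $\pi_\Gamma$ is already a complex balanced stationary distribution, which is the conclusion sought. Neither Anderson's theorem nor the uniqueness of $\pi_\Gamma$ on $\Gamma$ supplies the restricted balance. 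The missing ingredient is graph-theoretical: since $(\G,\kappa)$ is complex balanced, $\G$ is weakly reversible (Theorem \ref{thm:complex_balanced}), and then for every $y'\in\C_\Gamma$ the sets of reactions of $\R$ entering and leaving $y'$ coincide with the corresponding sets in $\R_\Gamma$ — this is the paper's Lemma \ref{lem:reactions_gamma}, proved by exhibiting $x\in\Gamma$ with $x\geq y'$ (so every outgoing reaction at $y'$ is active on $\Gamma$) and using weak reversibility and Lemma \ref{lem:path} to show $x+y-y'\in\Gamma$ for every incoming reaction $y\to y'$. With that lemma, no terms are dropped when \eqref{eq:cb_mass-action} is read at complexes of $\C_\Gamma$, and your substitution computation then finishes the verification exactly as in the paper's proof of Theorem \ref{thm:stochastic_complex_balanced}.
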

 \begin{proof}
  If $\Gamma$ is positive, then $(\G_\Gamma, K_\Gamma)=(\G,K)$. Therefore, by Theorem \ref{thm:stochastic_complex_balanced} if $(\G,K)$ is stochastically complex balanced then $\G$ is weakly reversible. Moreover, if $K$ is mass-action kinetics with rate constants $\kappa$, it follows from Theorem \ref{thm:stochastic_complex_balanced} that there exists a complex balanced stationary distribution on $\Gamma$ if and only if $(\G,\kappa)$ is complex balanced. In this case, by Theorem \ref{thm:anderson}, a stationary distribution exists on every irreducible component and it is of the form \eqref{eq:definition_stat_prob}. By Theorem \ref{thm:stochastic_complex_balanced}, it is a complex balanced stationary distribution.
 \end{proof}

 Corollary \ref{cor:stochastic_complex_balanced_positive} might be considered a stochastic version of  Theorem \ref{thm:complex_balanced}, especially if \eqref{eq:limit_stationary} is taken to be equivalent to ``asymptotic stability'' for a deterministic equilibrium.
 Part of the corollary is known \cite{anderson:product-form} (see also Theorem \ref{thm:anderson}), and the whole corollary might therefore be considered as an extension of the result in  \cite{anderson:product-form} on mass-action systems.
  In this sense,  Theorem \ref{thm:stochastic_complex_balanced} provides an even  more general version, which deals with complex balanced subsystems of $(\G,\kappa)$.
 
 We now state the parallel versions of Theorem \ref{thm:deficiency_zero_iff}-\ref{thm:deficiency_zero} for the stochastic setting.
 \begin{corollary}\label{cor:stoch_deficiency_zero_iff}
  The mass-action system $(\G,\kappa)$ is stochastically complex balanced for any choice of $\kappa$ if and only if $\G$ is weakly reversible and its deficiency is zero.
 \end{corollary}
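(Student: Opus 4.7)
The plan is to reduce this statement directly to its deterministic counterpart, Theorem \ref{thm:deficiency_zero_iff}, via the bridge Corollary \ref{cor:stochastic_complex_balanced_positive}, which equates stochastic complex balance with deterministic complex balance for mass-action systems. Since the $\kappa$-quantifier appears in both formulations symmetrically, the equivalence should transfer verbatim.

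Concretely, I would argue as follows. Fix a reaction network $\G$. By Corollary \ref{cor:stochastic_complex_balanced_positive}, for every single choice of $\kappa\in\RR_+^k$ the mass-action system $(\G,\kappa)$ is stochastically complex balanced if and only if $(\G,\kappa)$ is (deterministically) complex balanced. Taking the conjunction of these biconditionals over all $\kappa\in\RR_+^k$ yields
\[
(\G,\kappa)\text{ stochastically complex balanced for all }\kappa
\;\Longleftrightarrow\;
(\G,\kappa)\text{ complex balanced for all }\kappa.
\]
Then Theorem \ref{thm:deficiency_zero_iff} identifies the right-hand condition with $\G$ being weakly reversible and of deficiency zero, which completes both directions at once.

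There is essentially no obstacle: the nontrivial work has already been carried out in proving Theorem \ref{thm:stochastic_complex_balanced} (product-form stationary distributions on positive components coming from positive complex balanced equilibria) and Corollary \ref{cor:stochastic_complex_balanced_positive}. The only mild point worth checking is that the $\kappa$-by-$\kappa$ correspondence of the corollary genuinely allows one to pass the universal quantifier through the biconditional, which is immediate since the equivalence holds pointwise in $\kappa$. No additional hypothesis such as essentiality or existence of an irreducible component is needed, because Theorem \ref{thm:anderson} (invoked inside Corollary \ref{cor:stochastic_complex_balanced_positive}) guarantees the requisite product-form stationary distribution on every irreducible component of a complex balanced mass-action system, and deficiency zero weakly reversible networks are weakly reversible hence essential.
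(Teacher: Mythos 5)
Your proposal is correct and follows the paper's own argument exactly: the paper also deduces the corollary as an immediate consequence of Corollary \ref{cor:stochastic_complex_balanced_positive} (the pointwise-in-$\kappa$ equivalence of stochastic and deterministic complex balancing) combined with Theorem \ref{thm:deficiency_zero_iff}. Your extra remark about passing the universal quantifier over $\kappa$ through the biconditional is a fair, if unneeded, elaboration of the same reduction.
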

 \begin{proof}
  The result is an immediate consequence of Corollary \ref{cor:stochastic_complex_balanced_positive} and Theorem \ref{thm:deficiency_zero_iff}.
 \end{proof}

  \begin{theorem}\label{thm:stationary_boundary}
  Consider a stochastic reaction system $(\G,K)$, and assume the deficiency of $\G$ is zero. Let $x$ be a state in an irreducible component $\Gamma$ and let $y\to y'$ in $\R$. Then, $y\leq x$ only if $y\to y'$ is terminal. Moreover, if $K$ is mass-action kinetics, then on $\Gamma$ the stationary distribution has the form
  \begin{equation}\label{eq:stat_prob_terminal}
   \pi_\Gamma(x)=M^c_\Gamma\prod_{i\colon S_i\in\Sp^*}\frac{c_i^{x_i}}{x_i!}\quad\text{for }x\in\Gamma,
  \end{equation}
  where $c$ is a positive complex balanced equilibrium for the terminal system, and $M^c_\Gamma$ is a normalising constant. 
 \end{theorem}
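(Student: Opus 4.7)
The plan is to deduce both claims from earlier results, chiefly Lemma \ref{lem:subnet}, Proposition \ref{prop:terminal_strong_linkage_classes}, Theorem \ref{thm:deficiency_zero_iff}, and Theorem \ref{thm:stochastic_complex_balanced}. For the first assertion, Lemma \ref{lem:subnet} gives $\delta(\G_\Gamma)\leq \delta(\G)=0$, and since the deficiency is non-negative by \eqref{eq:deficiency_kernel}, $\delta(\G_\Gamma)=0$. Proposition \ref{prop:terminal_strong_linkage_classes} then yields $\R_\Gamma\subseteq \R^*$. If $y\leq x$ for some $x\in\Gamma$, the reaction $y\to y'$ is active on $\Gamma$, hence belongs to $\R_\Gamma\subseteq\R^*$ and is terminal.

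For the mass-action part, I would first observe that the same argument gives $\delta(\G^*)=0$, and since every terminal reaction sits on a closed directed path within $\R^*$ the network $\G^*$ is weakly reversible. Theorem \ref{thm:deficiency_zero_iff} thus supplies a positive complex balanced equilibrium $c$ of $(\G^*,\kappa^*)$. The key structural observation is that $\G_\Gamma$ is a disjoint union of some of the terminal strongly connected components of $\G$: if $y\to y'\in\R_\Gamma$ is active on $x\in\Gamma$ with $y\leq x$, and $\widetilde y\to\widetilde y'$ lies in the same terminal strongly connected component as $y\to y'$, then choosing any directed path $y=z_0\to z_1\to\cdots\to z_p=\widetilde y$ inside that component and firing the reactions in order from $x$ is possible, because the activity condition $z_j\leq x+z_j-y$ reduces to $y\leq x$; irreducibility of $\Gamma$ keeps every intermediate state in $\Gamma$, and the endpoint $x+\widetilde y-y\in\Gamma$ witnesses $\widetilde y\to\widetilde y'\in\R_\Gamma$. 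Consequently every reaction in $\R^*$ incident to a complex of $\C_\Gamma$ already lies in $\R_\Gamma$, so the complex balancing equations \eqref{eq:cb_mass-action} at complexes of $\C_\Gamma$ read identically for $(\G^*,\kappa^*)$ and $(\G_\Gamma,\kappa_\Gamma)$. Therefore the restriction $c|_{\Sp_\Gamma}$ is a positive complex balanced equilibrium of $(\G_\Gamma,\kappa_\Gamma)$.

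Applying Theorem \ref{thm:stochastic_complex_balanced} to $(\G_\Gamma,\kappa_\Gamma)$ then yields the stationary distribution on $\Gamma$ in the form $\pi_\Gamma(x)=M\prod_{i\colon S_i\in\Sp_\Gamma}c_i^{x_i}/x_i!$. To upgrade this to \eqref{eq:stat_prob_terminal}, I would note that for $S_i\in\Sp^*\setminus\Sp_\Gamma$ no complex in $\C_\Gamma$ involves $S_i$, so no reaction in $\R_\Gamma$ alters $x_i$; by irreducibility $x_i$ is constant on $\Gamma$. The additional factors $c_i^{x_i}/x_i!$ for such $i$ are therefore constants on $\Gamma$ and can be absorbed into the normalising constant, producing \eqref{eq:stat_prob_terminal}.

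The principal obstacle is the structural claim that $\G_\Gamma$ is a union of full terminal strongly connected components of $\G$: without it the complex balancing equations for $(\G^*,\kappa^*)$ would not restrict cleanly to those for $(\G_\Gamma,\kappa_\Gamma)$, and one could not use a single equilibrium $c$ of the terminal system. Everything else is a chaining together of previously established results.
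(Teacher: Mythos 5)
Your proof is correct, but the mass-action half follows a genuinely different route from the paper's. The paper, after observing (as you do) that the terminal network has deficiency zero and is weakly reversible, so that $(\G^*,\kappa^*)$ is complex balanced by Theorem \ref{thm:deficiency_zero_iff}, argues at the level of processes: since only terminal reactions are active on $\Gamma$, the state decomposes as $\Gamma=\Gamma^*\times\{v\}$ with the coordinates outside $\Sp^*$ frozen, the projection of $X(t)$ onto the species of $\Sp^*$ is distributed as the process associated with $(\G^*,\kappa^*)$, for which $\Gamma^*$ is an irreducible component, and Theorem \ref{thm:anderson} (or Corollary \ref{cor:stochastic_complex_balanced_positive}) applied to $(\G^*,\kappa^*)$ yields \eqref{eq:stat_prob_terminal} at once. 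You instead stay inside the $\Gamma$-system: you establish the structural fact that $\R_\Gamma$ is a union of \emph{complete} terminal strongly connected components (your path-firing argument is exactly Lemma \ref{lem:path}, in the spirit of the proofs of Lemmas \ref{lem:weakly_reversible} and \ref{lem:reactions_gamma}), use it to see that the complex balance equations of $(\G^*,\kappa^*)$ at complexes of $\C_\Gamma$ coincide with those of $(\G_\Gamma,\kappa_\Gamma)$, so that the restriction of $c$ is a positive complex balanced equilibrium of the $\Gamma$-system, then invoke Theorem \ref{thm:stochastic_complex_balanced} and absorb the coordinates of species in $\Sp^*\setminus\Sp_\Gamma$, which are constant on $\Gamma$, into the normalising constant. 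Both arguments are sound; the paper's is shorter because Anderson's theorem is applied to the terminal system directly, while yours avoids the distributional identification of the projected process with a different system and isolates a reusable structural statement about $\R_\Gamma$ that the paper never states explicitly. Two points you should make explicit: (i) your choice of a directed path from $y$ to $\widetilde y$ uses that each terminal strongly connected component is strongly connected as a directed graph — true, and consistent with the paper's terminology, but the paper defines these components via (undirected) connected components of the terminal network, so a one-line justification is warranted; (ii) passing from ``a complex balanced stationary distribution of the form \eqref{eq:def_stat_distr_gamma} exists'' to ``the stationary distribution on $\Gamma$ has this form'' uses uniqueness of stationary distributions on irreducible components, as recorded in the paper. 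Your first assertion (activity implies terminal) is the same as the paper's, which simply re-derives Proposition \ref{prop:terminal_strong_linkage_classes} inline where you cite it together with Lemma \ref{lem:subnet}.
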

 
 The proof is in Appendix \ref{sec:proofs}.

 \begin{theorem}\label{thm:stoch_deficiency_zero}
  Consider a stochastic reaction system $(\G,K)$, and assume that the deficiency of $\G$ is zero. Then the following statements hold:
  \begin{enumerate}[i)]
   \item if $\G$ is not weakly reversible, then there exist no positive irreducible components;
   \item if $\G$ is weakly reversible, then $\G$ is essential, and if $K$ is mass-action kinetics then there exists a unique stationary distribution on every irreducible component.
  \end{enumerate}  
 \end{theorem}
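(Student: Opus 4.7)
The plan is that both parts of the theorem should fall out as direct consequences of results already established in the paper, so no new technical machinery is needed; the proof is mainly a bookkeeping exercise in assembling the right pieces.

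For part (i), I would argue by contradiction. Suppose $\Gamma$ is a positive irreducible component of $\G$. By the definition of positivity, every reaction of $\G$ is active on some state of $\Gamma$, so $\G_\Gamma = \G$. Since $\G$ has deficiency zero, Proposition \ref{prop:terminal_strong_linkage_classes} applies and gives that $\G_\Gamma$ is a subnetwork of $\G^*$. Combining, $\G = \G_\Gamma \subseteq \G^*$, which says that every reaction of $\G$ is terminal, i.e.\ $\G$ is weakly reversible. This contradicts the hypothesis of part (i), so no positive irreducible component can exist.

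For part (ii), essentiality is immediate: the text already records (citing \cite{craciun:dynamical}) that any weakly reversible network is essential, and $\G$ is weakly reversible by assumption. For the claim about stationary distributions under mass-action kinetics, I would chain the previous results: by Theorem \ref{thm:deficiency_zero_iff}, a weakly reversible deficiency-zero mass-action system $(\G,\kappa)$ is complex balanced for every choice of $\kappa$; then Theorem \ref{thm:anderson} (equivalently, Corollary \ref{cor:stochastic_complex_balanced_positive}) guarantees that a unique stationary distribution $\pi_\Gamma$ of the product-form Poisson-like shape \eqref{eq:definition_stat_prob} exists on each irreducible component $\Gamma$. Uniqueness on a single irreducible component is also built into the standard continuous-time Markov chain theory once the component is a recurrent class supporting a normalisable distribution.

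The argument is almost entirely cite-and-combine, so the only subtle point is making sure one really has \textbf{existence} of the normalised probability measure in \eqref{eq:definition_stat_prob} on every irreducible component, including infinite ones; this is not proved afresh here but is exactly the content of Theorem \ref{thm:anderson} (i.e.\ the result of \cite{anderson:product-form}), which asserts that $M^c_\Gamma < \infty$ in all cases. Beyond invoking that, I do not anticipate any genuine obstacle: the crucial conceptual work was already done in Proposition \ref{prop:terminal_strong_linkage_classes} (which links positivity of $\Gamma$ to weak reversibility via deficiency zero) and in Theorem \ref{thm:deficiency_zero_iff} (which supplies complex balancedness from weak reversibility plus deficiency zero).
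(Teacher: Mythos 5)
Your proposal is correct and follows essentially the same route as the paper: the paper derives part (i) by noting that a positive component forces $\G_\Gamma=\G$ and invoking Lemma \ref{lem:weakly_reversible} to get weak reversibility, while you invoke Proposition \ref{prop:terminal_strong_linkage_classes} (whose proof rests on that same lemma) to conclude $\R\subseteq\R^*$, which is the same argument in light of the definition of weak reversibility. Part (ii) is handled exactly as in the paper, via \cite{craciun:dynamical} for essentiality and Theorem \ref{thm:deficiency_zero_iff} plus Theorem \ref{thm:anderson} (or Corollary \ref{cor:stochastic_complex_balanced_positive}) for the stationary distribution.
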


 The proof of the theorem is in Appendix \ref{sec:proofs}. In  case (i), Theorem \ref{thm:stationary_boundary} provides the form of the stationary distribution. Hence we have characterised the stationary distribution for any deficiency zero reaction system, irrespectively whether it is complex balanced or not.
 
 \begin{ex} 
  Consider the two stochastic mass-action systems
  $$A \cee{<=>[\kappa_1][\kappa_2]} B,\quad 10A \cee{<=>[\kappa_3][\kappa_4]} 10B\quad\text{and}\quad A \cee{<=>[\kappa_1][\kappa_2]} B,\quad 10A \cee{->[\kappa_3]} 0.$$
 The  behaviours of the two corresponding deterministic systems differ substantially, while the behaviours of the stochastic systems are equivalent on
  the irreducible components $\Gamma_\theta=\{x\in\NN^2\colon x_1+x_2=\theta\}$ with  $0\le\theta<10$ an integer. Indeed, in both cases the $\Gamma_\theta$-system is
  $$A \cee{<=>[\kappa_1][\kappa_2]} B,$$
  which is complex balanced (Theorem \ref{thm:deficiency_zero_iff}). It follows from Theorem \ref{thm:stochastic_complex_balanced} that the stationary distribution on  $\Gamma_\theta$ is 
  $$\pi_\theta(x_1,x_2)=M_\theta\frac{\kappa_2^{x_1}}{x_1!}\frac{\kappa_1^{x_2}}{x_2!}\quad\text{for }(x_1,x_2)\in\Gamma_\theta,$$
  for a suitable normalizing constant $M_\theta$. The stationary distributions are complex balanced, but since $\Gamma_\theta$ is not positive in either of the two networks, we cannot conclude that the systems are stochastically complex balanced. Indeed, they are not for some choice of rate constants (Corollary \ref{cor:stoch_deficiency_zero_iff}).
  
  Incidentally, note that the second network is not almost essential.
  \end{ex}

\section{Product-form Poisson-like stationary distributions}

 The above results draw  parallels between stochastic and deterministic reaction networks. If a mass-action system is (stochastically) complex balanced, then the stationary distribution on every irreducible component is a product-form Poisson-like distribution. Does the reverse statement  hold true too? If the stationary distribution is a product-form Poisson-like distribution on some, or all irreducible components, does it follow that the system is complex balanced?  
 In the spirit of the first part of the paper we would like to achieve a full characterisation of stochastic systems with product-form Poisson-like stationary distributions. However, even though the hypothesis of Theorem \ref{thm:main_thm} below is rather general, a full characterisation seems hard to achieve. 
 \begin{theorem}\label{thm:main_thm}
  Let $\G$ be an almost essential reaction network,  $\kappa\in\RR^k_+$  a vector of rate constants and $c\in\RR^n_+$ a  vector with positive entries. 
  The probability distribution $\pi_\Gamma\colon\Gamma\to(0,1]$, defined by \eqref{eq:definition_stat_prob} is a stationary distribution for the stochastic mass-action system $(\G,\kappa)$ for all irreducible components $\Gamma\subseteq \NN^n$ of $\G$ if and only if $c$ is a complex balanced equilibrium for $(\G,\kappa)$.
 \end{theorem}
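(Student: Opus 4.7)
The \emph{if} direction is immediate from Theorem \ref{thm:anderson}, so the content lies in the converse. My plan is to substitute the ansatz into the master equation \eqref{eq:master}, reorganise the resulting sum by a common pivot complex, and then use almost essentiality to promote the identity to a polynomial equation whose vanishing forces complex balance.

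Substituting the mass-action rates and $\pi_\Gamma(x)=M^c_\Gamma\,c^x/x!$ into \eqref{eq:master} and cancelling the common factor $M^c_\Gamma$, each summand becomes $\kappa_{y\to y'}\,c^y\cdot c^{x-\bullet}/(x-\bullet)!\cdot\mathbbm{1}_{\{x\geq\bullet\}}$, with $\bullet=y'$ on the left-hand side (the state before firing) and $\bullet=y$ on the right (the state of firing). Grouping the left by the target complex $y'$, the right by the source complex $y$, and renaming this common pivot $z$, the master equation becomes
\[
\sum_{z\in\C}\Psi(z)\,\frac{c^{x-z}\,\mathbbm{1}_{\{x\geq z\}}}{(x-z)!}=0,\qquad\Psi(z):=\sum_{y\colon y\to z\in\R}\kappa_{y\to z}\,c^y-c^z\sum_{y'\colon z\to y'\in\R}\kappa_{z\to y'}.
\]
Observe that $\Psi\equiv 0$ on $\C$ is precisely the complex-balance condition \eqref{eq:cb_mass-action} at $c$, so it remains to extract this.

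A subtlety is that the manipulation uses the formula $M^c_\Gamma c^{x+y-y'}/(x+y-y')!$ at states $x+y-y'$ that need not lie in $\Gamma$. However, each irreducible component is closed under firing: if $x+y-y'\in\Gamma'$ with $\Gamma'\neq\Gamma$ and $x\geq y'$, then firing $y\to y'$ at $x+y-y'$ would send the process from $\Gamma'$ into $\Gamma$, contradicting the invariance of $\Gamma'$. So any troublesome $x+y-y'$ lies outside every irreducible component, a finite set by almost essentiality. Consequently, for every $x\in\NN^n$ with coordinates sufficiently large, $x$ lies in some irreducible component $\Gamma$, each relevant $x+y-y'$ also lies in $\Gamma$, every indicator $\mathbbm{1}_{\{x\geq z\}}$ equals $1$, and the identity above holds. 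Multiplying by $x!/c^x$ then yields
\[
\sum_{z\in\C}\Psi(z)\,c^{-z}\,\frac{x!}{(x-z)!}=0
\]
on a cofinite subset of $\NN^n$. The left-hand side is a polynomial in $x_1,\dots,x_n$, since $x!/(x-z)!=\prod_{i=1}^n x_i(x_i-1)\cdots(x_i-z_i+1)$, and any polynomial vanishing on a cofinite subset of $\NN^n$ (which is Zariski dense in $\RR^n$) must be identically zero. The falling factorials $\{x!/(x-z)!:z\in\NN^n\}$ form a basis of $\RR[x_1,\dots,x_n]$ (unitriangular with leading term $x^z$ in the monomial basis), hence are linearly independent, so $\Psi(z)\,c^{-z}=0$ for every $z\in\C$. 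Since $c>0$, this yields $\Psi(z)=0$ for every $z\in\C$, i.e.\ complex balance of $(\G,\kappa)$ at $c$.

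The main obstacle I expect is the support bookkeeping in the third paragraph: one must justify that the formula is only applied where $\pi_\Gamma$ is actually defined, by ruling out the possibility that states $x+y-y'\notin\Gamma$ carry genuine probability flux into $\Gamma$. Almost essentiality is precisely what guarantees this up to a finite exceptional set, leaving a Zariski-dense set of ``honest'' $x$ to feed into the polynomial argument.
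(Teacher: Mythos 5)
Your proposal is correct and follows essentially the same route as the paper's proof: the ``if'' direction via Theorem \ref{thm:anderson}, and for the converse, substituting the product form into the master equation at states deep inside $\NN^n$ (using almost essentiality to guarantee these and their neighbours $x+y-y'$ lie in the same irreducible component), then concluding from the linear independence of the falling-factorial polynomials $x!/(x-z)!$, $z\in\C$, which is exactly \eqref{eq:two_sides_equal} rearranged into your single sum over the pivot complex. One small inaccuracy: the set of states where your identity is established, $\{x\colon \min_i x_i>C\}$, is not cofinite in $\NN^n$ for $n\ge 2$, but it is an infinite grid $\prod_i\{C+1,C+2,\dots\}$ and hence still forces the polynomial to vanish identically, so the argument stands.
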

 \begin{proof}
 By Theorem \ref{thm:anderson}, if $c>0$ is a complex balanced equilibrium for $(\G,\kappa)$, then the stationary distribution on all irreducible components $\Gamma\subseteq\NN^n$ is of the form \eqref{eq:definition_stat_prob}. 
 
 Oppositely, assume that \eqref{eq:definition_stat_prob} is the stationary distribution  on $\Gamma$ for the stochastic mass-action system $(\G,\kappa)$, for all irreducible components $\Gamma$. Since $\G$ is almost essential, there exists a constant $K$ such that any states $x$ with $\|x\|>K$ belongs to an irreducible component $\Gamma$. For any $x\in\NN^n$, such that 
 \begin{equation}\label{eq:lower_bound_x}
  \min_{S_i\in\Sp}x_i>\max_{y\to y'\in\R}(\|y\|_\infty+\|y'\|_\infty)+K,
 \end{equation}
 we have that $x\ge y$ and $x-y'+y\geq y$ for all  $y\to y'\in\R$. Then, since \eqref{eq:definition_stat_prob} is a stationary distribution and since $x$ and $x+y-y'$ are in the same irreducible component for all $y\to y'\in\R$,  we have from \eqref{eq:master}
 \begin{equation}\label{eq:polynomial_equality_first_step}
  \sum_{y\to y'\in\R}\pi_\Gamma(x+y-y')\kappa_{y\to y'}\frac{(x+y-y')!}{(x-y')!}=\pi_\Gamma(x)\sum_{y\to y'\in\R}\kappa_{y\to y'}\frac{x!}{(x-y)!},
 \end{equation}
 for all $x\in\Gamma$ satisfying \eqref{eq:lower_bound_x}. Further, using \eqref{eq:definition_stat_prob}, equation \eqref{eq:polynomial_equality_first_step} becomes
 $$\sum_{y\to y'\in\R}\frac{x!}{(x-y')!}\kappa_{y\to y'}c^{y-y'}=\sum_{y\to y'\in\R}\kappa_{y\to y'}\frac{x!}{(x-y)!},$$ 
 which, by rearranging terms, leads to
 \begin{equation}\label{eq:two_sides_equal}
  \sum_{y'\in\C}\frac{x!}{(x-y')!}\sum_{y\to y'\in\R}\kappa_{y\to y'}c^{y-y'}=\sum_{y'\in\C}\frac{x!}{(x-y')!}\sum_{y'\to y\in\R}\kappa_{y'\to y}.
 \end{equation}
 The equality holds for all $x\in\NN^n$ 
 satisfying \eqref{eq:lower_bound_x}, therefore the  polynomials on the two sides of \eqref{eq:two_sides_equal} are equal. 
 
 For any $y'\in\C$, let $p_{y'}(x)$ be the polynomial
 $$p_{y'}(x)=\frac{x!}{(x-y')!}.$$
 The monomial  with maximal degree in  $p_{y'}$ is $x^{y'}$, and these differ for all complexes $y'\in\C$.
 This  implies that $p_{y'}$, $y'\in\C$, are linearly independent on $\RR$, and thus, the polynomials on the two sides of \eqref{eq:two_sides_equal} are equal if and only if
 $$\sum_{y\in\C}\kappa_{y\to y'}c^{y-y'}=\sum_{y\in\C}\kappa_{y'\to y}\quad\text{for all }y'\in\C.$$
 Hence, $c$ is a complex balanced equilibrium for $(\G,\kappa)$ and the proof is completed.
\end{proof}

 \subsection{Relaxation of Assumptions in Theorem \ref{thm:main_thm}}
 
 To infer the existence of positive complex balanced equilibria in Theorem \ref{thm:main_thm}, the assumptions of the theorem could be weakened. Specifically,  it is only required that \eqref{eq:two_sides_equal} holds for a set of states whose geometry and cardinality allow us to conclude that the polynomials on the two sides of \eqref{eq:two_sides_equal} are the same. For \eqref{eq:two_sides_equal} to hold, we need $x$ to be in a irreducible component  and we require $x\ge y$ and $x-y'+y\geq y$ for all reactions $y\to y'\in\R$, as well as the stationary distribution evaluated in $x$ and  $x-y'+y$ to be of the form \eqref{eq:definition_stat_prob}. If a state $x$ satisfies this, we call it a \emph{good state}. 
 
 A more general condition than being almost essential could be chosen case by case and depends on the monomials appearing in \eqref{eq:two_sides_equal}. For example, if  the set of complexes  coincides with the set of species, then the polynomials in \eqref{eq:two_sides_equal} are linear and the existence of $n+1$ \emph{good states} in general position implies the existence of a positive complex balanced equilibrium. In general, let $d$ be the total degree of the polynomials in \eqref{eq:two_sides_equal}.  Then  it is sufficient to have $n$ lines in general position with more than $d+1$ good states on each of them. Therefore, to conclude that a system is complex balanced it is sufficient to check the behaviour of a finite number of states, lying on a finite number of irreducible components. However,  it follows from Examples \ref{ex:not_complex_balanced} and \ref{ex:not_complex_balanced_2} that the existence of arbitrarily many good states on a few irreducible components does not imply the existence of a positive complex balanced equilibrium in general. Finally, in order to postulate that the mass-action system is complex balanced, it is necessary that the vector $c$ appearing in  Theorem \ref{thm:main_thm} is the same for every irreducible component, as shown in Example \ref{ex:not_complex_balanced_different_c}.
 
 The following examples are also meant to give an idea of why it is hard to obtain a full characterization of stochastic mass-action systems with a product-form Poisson-like stationary distribution on some irreducible component.

 \begin{ex}\label{ex:not_complex_balanced} 
  Let $\rho\in\RR_+$ and let $\theta\geq2$ be an integer. Consider the stochastic mass-action system
  \begin{equation}\label{eq:ex}
   A \xrightarrow{\rho(\theta-1)} B\qquad 2B \xrightarrow[\phantom{\rho(\theta-1)}]{\rho} 2A,
  \end{equation}
  where $\kappa_1=\rho(\theta-1)$ and $\kappa_2=\rho$ are the rate constants. The reaction network is almost essential. It is shown in Appendix \ref{sec:calculations} that the stationary distribution on the irreducible component $\Gamma_\theta=\{x\in\NN^2\colon x_1+x_2=\theta\}$ has the form \eqref{eq:definition_stat_prob} with $c=(1,1)$,  namely
  \begin{equation}\label{eq:prod_distr_example}
   \pi_\theta(x_1,x_2)=M_\theta\frac{1}{x_1!x_2!}\quad\text{for }(x_1,x_2)\in\Gamma_\theta,
  \end{equation}
  where $M_\theta$ is a normalising constant. However, the mass-action system is not complex balanced as the reaction network is not weakly reversible (Theorem \ref{thm:complex_balanced}). In particular, by Theorem \ref{thm:main_thm}, not all irreducible components can have a stationary distribution of the form \eqref{eq:definition_stat_prob} with $c=(1,1)$. Trivially, the absorbing states $(0,0)$ and $(0,1)$ have it.
  
Additionally, we should point out that there is not an \emph{equivalent} system on $\Gamma$ (that is, a stochastic mass-action system with the same transition rate matrix on the states of $\Gamma$ as \eqref{eq:ex}) which is complex balanced. Consider the case $\theta=1$. Since the transition from $(0,2)$ to $(2,0)$ is possible according to \eqref{eq:ex}, any equivalent mass-action system must contain the reaction $2B\to2A$, with rate constant $\rho$. It can be further shown that any equivalent weakly reversible mass-action system must contain the connected component
  \begin{center}
  \begin{tikzpicture}
   \node[state] (A+B) at (0,0) {$A+B$};
   \node[state] (2B)  at (2,0.6) {$2B$};
   \node[state] (2A)  at (4,0) {$2A\;.$};
   \path[->] 
    (2B)  edge node{$\rho$} (2A)
    (2A)  edge node{$\frac{\rho}{2}$} (A+B)
    (A+B) edge node{$\rho$} (2B);
  \end{tikzpicture}
  \end{center}
  This prevents the system from being complex balanced, since there is not a $c\in\R_+^2$  fulfilling \eqref{eq:cb_mass-action} for the three complexes $2B$, $2A$ and $A+B$.
 \end{ex}

  \begin{ex}
  Let $\rho_1,\rho_2,\rho_3\in\RR_+$ and let $\theta\geq2$ be an integer. Consider the modification of  Example \ref{ex:not_complex_balanced} given by
  $$A \cee{<=>[\rho_1(\theta-1)+\rho_2][\rho_2]} B\qquad
   2B \cee{<=>[\phantom{\theta-}\rho_1+\rho_3\phantom{()1}][\rho_3]} 2A,$$
  which is weakly reversible.
  If we let $\rho_2=0$ and $\rho_3=0$, then the system reduces to that of Example \ref{ex:not_complex_balanced} by removing the two reversible reactions. It can be shown that for any parameter choice, \eqref{eq:prod_distr_example} is still a stationary distribution on the irreducible component $\Gamma_\theta=\{x\in\NN^2\,:\,x_1+x_2=\theta\}$. However, for some choice of parameters the mass-action system is not complex balanced. This can be seen either by direct computation on the system of complex balance equations \eqref{eq:cb_mass-action} or by noting that the deficiency of the network is 1, so there must be a choice of parameters which prevents positive complex balanced equilibria by Theorem \ref{thm:deficiency_zero_iff}. It can be further shown that irreducible components different from $\Gamma$ do not possess a product-form Poisson-like stationary distribution.
 \end{ex}
 \begin{ex}\label{ex:not_complex_balanced_2}
  Consider the stochastic mass-action system with $\rho\in\RR_+$ and $\theta_1,\theta_2$ two positive integers,
   \begin{alignat*}{2}
   A &\xrightarrow[\phantom{\rho(\theta_1+\theta_2-1)}]{\rho\theta_1\theta_2} B&
   2B &\xrightarrow{\rho(\theta_1+\theta_2-1)} 2A\\[-0.3cm]
   3A &\xrightarrow[\phantom{\rho(\theta_1+\theta_2-1)}]{\rho} A+2B\qquad&
   2A+B &\xrightarrow[\phantom{\rho(\theta_1+\theta_2-1)}]{\rho} 3B.
  \end{alignat*}
  The reaction network is almost essential. For any $\theta\in\NN$, consider the irreducible component $\Gamma_\theta=\{x\in\NN^2\colon x_1+x_2=\theta+1\}$. Then $\pi_{\theta_1}$ and $\pi_{\theta_2}$, defined as in \eqref{eq:prod_distr_example}, are the (unique) stationary distributions on the irreducible components $\Gamma_{\theta_1}$ and $\Gamma_{\theta_2}$, respectively. For the relevant calculations see Appendix \ref{sec:calculations}. However, the mass-action system is not complex balanced, since the reaction network is not weakly reversible (Theorem \ref{thm:complex_balanced}).
  \end{ex}
 \begin{ex}\label{ex:not_complex_balanced_different_c}
  Theorem \ref{thm:main_thm} can be also used to compute the stationary distribution of a stochastic mass-action system which behaves as a complex balanced system on the irreducible components. Consider the weakly reversible (and therefore essential) stochastic mass-action system
  $$
   A\cee{<=>[\kappa_1][\kappa_2]} 2A\qquad
   A+B \cee{<=>[\kappa_3][\kappa_4]} 2A+B.
  $$
  On every irreducible component $\Gamma_\theta=\{x\in\NN^2\,:\,x_2=\theta\}$, $\theta\in\NN$, the associated continuous time Markov chain, which describes the evolution of the counts of $A$, has the same distribution as the process associated with
  $$A \cee{<=>[\kappa_1+\kappa_3\theta][\kappa_2+\kappa_4\theta]}2A,$$
  because the transition rates coincide. The latter system is complex balanced for any choice of rate constants. The stationary distribution has the form (Theorem \ref{thm:main_thm})
 $$  \pi_\theta(x)=M_\theta \frac{1}{x!}\left(\frac{\kappa_2+\kappa_4\theta}{\kappa_1+\kappa_3\theta}\right)^{\!\!x}$$
   for some positive constant $M_\theta$. The latter gives the stationary distribution of the original system as well. However, the rate of the Poisson distribution does depend on $\theta$, in which case the original system cannot be complex balanced (Corollary \ref{cor:stochastic_complex_balanced_positive}). For the same reason the example does not contradict Theorem  \ref{thm:main_thm}.
     \end{ex}
 
 \section{Applications}

 There are not many means to explicitly calculate the stationary distribution of a stochastic mass-action system. As an example, Theorem \ref{thm:stochastic_complex_balanced} can be used to determine the stationary distributions of mass-action systems like
 $$C\cee{<=>[\kappa_1][\kappa_2]}D,\quad 2A\cee{<=>[\kappa_3][\kappa_4]}2B,\quad A\cee{->[\kappa_5]}0.$$
 Indeed, for any irreducible component $\Gamma$ different from $\{0\}$, the $\Gamma$-system is given by
 $$C\cee{<=>[\kappa_1][\kappa_2]}D,$$
 which is weakly reversible and has deficiency zero, therefore it is complex balanced. Hence, the stationary distribution on $\Gamma$ has the form
 $$\pi_\Gamma(x)=M_\Gamma\frac{\kappa_2^{x_3}}{x_3!}\frac{\kappa_1^{x_4}}{x_4!}\quad\text{for }x\in\Gamma,$$
 where $x_3$ and $x_4$ denote the entries relative to $C$ and $D$, respectively. Alternatively, since the terminal system is given by
 $$C\cee{<=>[\kappa_1][\kappa_2]}D,\quad 2A\cee{<=>[\kappa_3][\kappa_4]}2B,$$
 Theorem \ref{thm:stationary_boundary} can be used to compute the stationary distribution. On every irreducible component $\Gamma$, it is given by 
 $$\pi_\Gamma(x)=\widetilde{M}_\Gamma\frac{(\sqrt{\kappa_4})^{x_1}}{x_1!}\frac{(\sqrt{\kappa_3})^{x_2}}{x_2!}\frac{\kappa_2^{x_3}}{x_3!}\frac{\kappa_1^{x_4}}{x_4!}\quad\text{for }\quad x\in\Gamma,$$
 which is equivalent to the previous formula since $x_1$ and $x_2$ are constantly $0$ on all irreducible components.

 If the system does not fulfil the conditions of Theorem \ref{thm:stochastic_complex_balanced} and neither can be cast as a birth-death process,  Theorem \ref{thm:main_thm} might be useful.   The following mass-action system is considered in \cite{anderson:lyapunov}:
 $$
   A\xrightarrow{\kappa_1}0\qquad
   0\xrightarrow{\kappa_2}2A.
 $$
 By Theorem \ref{thm:main_thm}, the stationary distribution cannot be Poisson. Indeed, it is given by the distribution of $Y=Y_1+2Y_2$, where $Y_1$ and $Y_2$ are two independent Poisson random variables with rates $\frac{\kappa_2}{\kappa_1}$ and $\frac{\kappa_2}{2\kappa_1}$, respectively. Hence,
 $$\pi(x)=e^{-\frac{3\kappa_2}{2\kappa_1}}\sum_{\substack{i,j\in\NN\\x=i+2j}}\frac{1}{i!j!}\left(\frac{\kappa_2}{\kappa_1}\right)^{i} \left(\frac{\kappa_2}{2\kappa_1}\right)^j.$$ 
 In \cite{anderson:lyapunov}, the following system is also considered:
 $$
  0\cee{<=>[\kappa_1][\kappa_2]}A\qquad
  2A\cee{<=>[\kappa_3][\kappa_4]}3A.
 $$
 It has the stationary distribution 
 $$\pi(x)=M\prod_{i=1}^x\frac{\theta_1[(i-1)(i-2)+\theta_2]}{i(i-1)(i-2)+\theta_3i}\quad\text{for }x\in\NN,$$
 where $\theta_1={\kappa_3}/{\kappa_4}$, $\theta_2={\kappa_1}/{\kappa_3}$, $\theta_3={\kappa_2}/{\kappa_4}$
 and $M=\pi(0)$ is a normalising constant. It is interesting  that $\pi(x)$ is a Poisson distribution if and only if $\theta_2=\theta_3$. In fact, and in accordance with our results, the mass-action system is complex balanced if and only if $\theta_2=\theta_3$.
 
 \section{Discussion}
 
 Corollary \ref{cor:stoch_deficiency_zero_iff} provides a characterisation of reaction networks that are stochastically complex balanced for any choice of rate constants. It is natural to wonder whether  a stationary distribution of the form \eqref{eq:definition_stat_prob} on some irreducible component $\Gamma$ for all choices of rate constants implies something specific about the $\Gamma$-system. If for specific form we intend deficiency zero and weakly reversible, this is not the case, as this is violated in Example \ref{ex:not_complex_balanced_different_c}. However, in Example \ref{ex:not_complex_balanced_different_c} the system might be described \emph{equivalently} by means of a weakly reversible deficiency zero system for any irreducible component. The question of whether this is always true remains open. We provide here two more examples.
 \begin{ex}\label{ex:equivalent_to_complex_balanced_finite_case}
  Consider the stochastic mass-action system
  $$
   2A\cee{->[\kappa_1]}2B\qquad
   A+3B\cee{->[\kappa_2]}3A+B.
  $$
  The underlying reaction network is considered in Figure \ref{figure}. On the irreducible component $\Gamma=\{(1,5),(3,3),(5,1)\}$, the Markov chain associated with the system has the same distribution as the Markov chain associated with
  $$
   2A\cee{<=>[\kappa_1][3\kappa_2]}2B,
  $$
  since the transition rates coincide. It is interesting to note that the dynamics of the two systems are different when they are deterministically modelled \cite{craciun:identifiability}. Due to Theorem \ref{thm:deficiency_zero_iff}, the latter system is complex balanced for any choice of rate constants. Therefore, by Theorem \ref{thm:main_thm}, the stationary distribution on $\Gamma$ has the form \eqref{eq:definition_stat_prob} on both systems for any choice of rate constants. The same argument does not hold, in this case, for the other irreducible components.
 \end{ex}
 \begin{ex}
  The same phenomenon as in Example \ref{ex:equivalent_to_complex_balanced_finite_case} is observed in the stochastic mass-action system
  $$
   2A\cee{->[\kappa_1]}3A+B\qquad
   A+3B\cee{->[\kappa_2]}2B.
  $$
  On the irreducible component $\Gamma=\{(x_1,x_2)\in\NN^2\,:\,x_1\geq2, x_1=x_2\}$, the Markov chain associated with the system has the same distribution as the Markov chain associated with
  $$
   2A\cee{<=>[\kappa_1][\kappa_2]}3A+B,
  $$
  since the transition rates coincide, and the latter network is weakly reversible and has deficiency zero. 
 \end{ex}

 \begin{appendix}
 \section{Preliminary results}
 
 Here we state some preliminary results that will be needed in Appendix \ref{sec:proofs}.
 \begin{appxlemma}\label{lem:path}
  Let $\G$ be a reaction network. If $y_1\to y_2\to\dots\to y_q$ is a directed path in the reaction graph of $\G$, and $x\geq y_1$, then $x+y_q-y_1$ is accessible from $x$.
 \end{appxlemma}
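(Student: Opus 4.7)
The plan is to exhibit an explicit sequence of reactions witnessing accessibility; no induction is actually required because the partial sums telescope. I would take the sequence of $q-1$ reactions $(y_j\to y_{j+1})_{j=1,\dots,q-1}$, which by hypothesis are all elements of $\R$. First I would verify condition (i) of the definition of accessibility: the total displacement is
\[
\sum_{j=1}^{q-1}(y_{j+1}-y_j)=y_q-y_1,
\]
so the state reached is $x+y_q-y_1$, as required.

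Next I would verify condition (ii). For $1<h\le q-1$, the state preceding the firing of $y_h\to y_{h+1}$ is
\[
x+\sum_{j=1}^{h-1}(y_{j+1}-y_j)=x+y_h-y_1.
\]
Activity of $y_h\to y_{h+1}$ at this state amounts to $x+y_h-y_1\ge y_h$, which reduces to $x\ge y_1$ and so holds by hypothesis. This is the only substantive step, and it is really just the observation that the telescoping structure turns every intermediate activity check into the single inequality $x\ge y_1$. I anticipate no genuine obstacle; the mild subtlety is handling the degenerate case $q=1$, where the sequence of reactions is empty and $x+y_q-y_1=x$ is accessible from itself via the empty sequence (allowed since the definition permits $q\ge 0$).
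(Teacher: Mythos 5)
Your proposal is correct and follows essentially the same route as the paper's proof: fire the reactions of the path in order, note the displacements telescope so the intermediate state before firing $y_h\to y_{h+1}$ is $x+y_h-y_1$, and reduce each activity check to the hypothesis $x\ge y_1$. The remark about the empty sequence when $q=1$ is a harmless extra detail the paper leaves implicit.
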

 \begin{proof}
  First, note that 
  $$x+\sum_{i=1}^{q-1} (y_{i+1}-y_i)=x+y_q-y_1.$$
  It is sufficient to note that if $x\geq y_1$, then for any $1\leq j\leq q-1$, we have
  $$x+\sum_{i=1}^{j-1} (y_{i+1}-y_i)= x+y_j-y_1\geq y_j.$$
  This concludes the proof.
 \end{proof}
 \begin{appxlemma}\label{lem:weakly_reversible}
  Let $\Gamma$ be an irreducible component such that $\G_\Gamma$ has deficiency zero. Then, $\G_\Gamma$ is weakly reversible. In particular, if $\G$ has deficiency zero, $\G_\Gamma$ has deficiency zero and is weakly reversible for every irreducible component $\Gamma$.
 \end{appxlemma}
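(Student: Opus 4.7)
The plan is to reduce to the first clause: if $\G$ has deficiency zero, then by Lemma \ref{lem:subnet} applied to the subnetwork $\G_\Gamma\subseteq\G$ the deficiency of $\G_\Gamma$ is at most zero, hence equal to zero since deficiency is non-negative. So once the first assertion is proved, the ``in particular'' clause is automatic.

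Assume therefore that $\G_\Gamma$ has deficiency zero; I will show that every reaction of $\G_\Gamma$ lies on a directed cycle of the reaction graph of $\G_\Gamma$, which is equivalent to weak reversibility. Fix an arbitrary $y\to y'\in\R_\Gamma$. By definition there exists $x\in\Gamma$ with $x\geq y$, so the reaction is active at $x$ and firing it yields $x+y'-y\in\Gamma$. Since $\Gamma$ is irreducible, $x$ is accessible from $x+y'-y$ via some sequence $(y_j\to y_j')_{j=1,\dots,q}$ of reactions, each of which is fired at a state of $\Gamma$ and hence lies in $\R_\Gamma$. Summing stoichiometric vectors along this return path gives $\sum_j(y_j'-y_j)=y-y'$, i.e.\ $\xi_{y\to y'}+\sum_j\xi_{y_j\to y_j'}=0$.

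Now I exploit the deficiency zero hypothesis. Let $D_\Gamma$ and $S_\Gamma$ be the analogues of $D$ and $S$ for $\G_\Gamma$, and set $v=d_{y\to y'}+\sum_j d_{y_j\to y_j'}\in D_\Gamma$. Under $\varphi|_{D_\Gamma}\colon D_\Gamma\to S_\Gamma$, which is an isomorphism precisely because $\delta_{\G_\Gamma}=0$ by \eqref{eq:deficiency_kernel}, $v$ maps to $\xi_{y\to y'}+\sum_j\xi_{y_j\to y_j'}=0$, so $v=0$. Expanding in the basis $\{e_z\}_{z\in\C_\Gamma}$ shows that, if $N_{a\to b}$ denotes the multiplicity with which the reaction $a\to b$ appears in the combined list (one copy of $y\to y'$ plus its count in the return sequence), then at every complex $z\in\C_\Gamma$ the total in-multiplicity $\sum_a N_{a\to z}$ equals the total out-multiplicity $\sum_b N_{z\to b}$.

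This is the Eulerian balance condition on the directed multigraph carried by these reactions, so it decomposes into edge-disjoint directed cycles in the reaction graph of $\G_\Gamma$; in particular $y\to y'$, having multiplicity at least one, sits on such a cycle. I expect the only subtle point to be making this combinatorial decomposition precise (a straightforward induction on $\sum_{a\to b}N_{a\to b}$, peeling off one cycle at a time whenever a balanced multigraph has a nonzero edge), but once it is in place we conclude that every reaction of $\G_\Gamma$ lies on a directed cycle, which is exactly weak reversibility.
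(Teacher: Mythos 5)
Your proof is correct and follows essentially the same route as the paper's: fix an active reaction, use irreducibility of $\Gamma$ to obtain a returning reaction sequence in $\R_\Gamma$ summing to zero in stoichiometric space, transfer this to $D_\Gamma$ via the isomorphism $\varphi|_{D_\Gamma}$ (deficiency zero), and read off from linear independence of the $e_z$ that the multiset of reactions is balanced at every complex, hence decomposes into directed cycles containing the chosen reaction. Your Eulerian-decomposition phrasing is just a cleaner formulation of the paper's ``iteratively reordering the terms $d_{y_j\to y_j'}$'' step, and your reduction of the ``in particular'' clause via Lemma \ref{lem:subnet} matches the paper as well.
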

 \begin{proof} If $\R_\Gamma$ is empty then $\G_\Gamma$ is weakly reversible and there is nothing to prove. Otherwise, if $\R_\Gamma$ is non-empty, let $y_1\to y_1'\in\R_\Gamma$. By hypothesis, there exists a state $x$ in $\Gamma$ with $x\geq y_1$. This means that $x+\xi_{y_1\to y_1'}$ is accessible from $x$. Moreover, since $x$ belongs to an irreducible component $\Gamma$, we have that $x$ is accessible from $x+\xi_{y_1\to y_1'}$ as well, which implies that
  $$x=x+\sum_{j=1}^q \xi_{y_j\to y_j'},$$
  for a certain choice of $\xi_{y_j\to y_j'}$. In particular, $\sum_{j=1}^q \xi_{y_j\to y_j'}=0$. By the hypothesis of deficiency zero, it follows that $\sum_{j=1}^q d_{y_j\to y_j'}=0$,
  because $\varphi$, defined in \eqref{eq:def_phi}, is an isomorphism between the spaces $D$ and $S$ associated with $\G_\Gamma$. Therefore, 
  $$\sum_{j=1}^q (e_{y_j'}-e_{y_j})=\sum_{y\in\C_\Gamma} \alpha_y e_y =0,$$
  for some integers $\alpha_y$. Since the vectors $e_y$ are linearly independent, $\alpha_y=0$ for all $y\in\C_\Gamma$.
  Hence, each $e_y$ that appears in the sum, must appear at least twice, once with coefficient $1$, once with $-1$. Consequently, by iteratively reordering the terms $d_{y_j\to y_j'}$, the reactions $(y_j\to y_j')_{j=1}^{q}$ form a union of directed closed paths in the reaction graph of $\G$. In particular, the reaction $y_1\to y_1'$ is contained in a closed directed path of the reaction graph of $\G_\Gamma$, and since this is true for every reaction in $\R_\Gamma$, $\G_\Gamma$ is weakly reversible. We conclude the proof by Lemma \ref{lem:subnet}, since if $\G$ has deficiency zero, so does every subnetwork of $\G$.
 \end{proof}
 \begin{appxlemma}\label{lem:reactions_gamma}
  Let $\G$ be a weakly reversible reaction network, and let $\Gamma$ be an irreducible component. Then, for any complex $y'\in\C_\Gamma$ we have
  \begin{align*}
   \{y\in\C\colon y\to y'\in\R\} &=\{y\in\C_\Gamma\colon y\to y'\in\R_\Gamma\}, \\
   \{y\in\C\colon y'\to y\in\R\} &=\{y\in\C_\Gamma\colon y'\to y\in\R_\Gamma\}.
  \end{align*}
 \end{appxlemma}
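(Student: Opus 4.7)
The two inclusions $\supseteq$ are immediate from $\R_\Gamma\subseteq\R$ and $\C_\Gamma\subseteq\C$, so the plan is to establish the reverse inclusions $\subseteq$. The crucial intermediate fact I would first isolate is: every complex $y'\in\C_\Gamma$ satisfies $x\geq y'$ for some $x\in\Gamma$. Once this is available, both $\subseteq$ inclusions drop out quickly by combining it with Lemma \ref{lem:path} and weak reversibility.

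To prove the intermediate step, I would use the definition of $\C_\Gamma$: the complex $y'$ must appear in at least one reaction of $\R_\Gamma$, either as source or as target. If $y'$ is the source of some $y'\to\hat y\in\R_\Gamma$, then by definition of $\R_\Gamma$ there is $x\in\Gamma$ on which $y'\to\hat y$ is active, so $x\geq y'$ directly. Otherwise $y'$ is only the target of some $\hat y\to y'\in\R_\Gamma$; picking $x\in\Gamma$ on which $\hat y\to y'$ is active, the state $x+y'-\hat y$ is accessible from $x$ and therefore lies in $\Gamma$ by irreducibility, while $x+y'-\hat y\geq y'$ because $x\geq\hat y$.

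Armed with this, for the second equality I take $y'\to y\in\R$ and choose $x\in\Gamma$ with $x\geq y'$; then $y'\to y$ is active on $x$, so $y'\to y\in\R_\Gamma$ and consequently $y\in\C_\Gamma$. For the first equality, given $y\to y'\in\R$, I invoke weak reversibility of $\G$ to obtain a directed path $y'=z_0\to z_1\to\dots\to z_r=y$ in the reaction graph of $\G$. Starting from $x\in\Gamma$ with $x\geq y'$, Lemma \ref{lem:path} gives that $x+y-y'$ is accessible from $x$, and irreducibility of $\Gamma$ places this new state in $\Gamma$. Since $x\geq y'$, we have $x+y-y'\geq y$, so $y\to y'$ is active on $x+y-y'$ and therefore belongs to $\R_\Gamma$ with $y\in\C_\Gamma$.

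The main obstacle is really just the case split in the intermediate step, since at that point we only know $y'$ is a node of the reaction graph of $\G_\Gamma$ and have no a priori reason for some $x\in\Gamma$ to dominate $y'$ coordinate-wise; the ``$y'$ is only a target'' case is the one that genuinely requires the fact that firing an active reaction lands us in $\Gamma$ again. Weak reversibility enters only in the first equality, precisely to connect $y'$ back to the predecessor $y$ by a directed path to which Lemma \ref{lem:path} can be applied.
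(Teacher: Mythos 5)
Your proposal is correct and follows essentially the same route as the paper: both arguments reduce everything to showing that some $x\in\Gamma$ dominates $y'$ coordinate-wise, establish this via the observation that $y'$ must appear in some reaction of $\R_\Gamma$ (handling the "only a target" case by firing that reaction and using irreducibility), and then obtain the out-reactions directly from activity at $x$ and the in-reactions from weak reversibility together with Lemma \ref{lem:path}. The only cosmetic difference is that you phrase the existence of such an $x$ as a direct case split where the paper argues by assuming no such $x$ exists; the content is identical.
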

 \begin{proof}
  One inclusion is trivial, since $\R_\Gamma\subseteq\R$. For the other inclusion, fix $y'\in\C_\Gamma$. Suppose that there exists $x\in\Gamma$ with $x\geq y'$. It follows that any reaction $y'\to y\in\R$ is active on $\Gamma$, and therefore is contained in $\R_\Gamma$. Moreover, since $\G$ is weakly reversible, for any reaction in $\R$ of the form $y\to y'$, there exists a directed path in the reaction graph of $\G$ from $y'$ to $y$. Hence, by Lemma \ref{lem:path}, $x+y-y'$ is accessible from $x$, which implies that $x+y-y'$ is in $\Gamma$ and that $y\to y'$ is in $\R_\Gamma$, since $x+y-y'\geq y$. Therefore, to conclude the proof it suffices to prove that there exists $x\in\Gamma$ with $x\geq y'$.
 
  If it were no $x\in\Gamma$ with $x\geq y'$, then no reaction of the form $y'\to y$ would be in $\R_\Gamma$. Since $y'\in\C_\Gamma$, there  exists a reaction of the form $y\to y'$. This means that there is $\tilde{x}\in\Gamma$, such that $\tilde{x}\geq y$. Hence, $\tilde{x}+y'-y$ is in $\Gamma$ with $\tilde{x}+y'-y\geq y'$, which concludes the proof.   
 \end{proof}
 
 \section{Proofs}\label{sec:proofs}

 \subsection{Proof of Theorem \ref{thm:complex_balanced}}
 
 It is proven in \cite{horn:general_mass_action} that if a deterministic reaction system $(\G,K)$ is complex balanced, then $\G$ is weakly reversible. By \cite{horn:general_mass_action}, we also know that if $K$ is mass-action kinetics, then all positive equilibria are complex balanced, and there exists exactly one positive equilibrium in each stoichiometric compatibility class, which is locally asymptotically stable. Therefore, to conclude the proof we only need to prove that in a complex balanced mass-action system $(\G,\kappa)$, the eventual  equilibria on the boundary of $\RR^n$ are also complex balanced.
 
 First of all note that any subsystem $(\G_L,\kappa_L)$ of $(\G,\kappa)$ corresponding to a linkage classes $L$ of $\G$ is complex balanced. Indeed, the projection of a positive complex balanced equilibrium of $(\G,\kappa)$ onto the space of the species of $L$ satisfies \eqref{eq:cb_mass-action} for any complex of $\G_L$, hence it is a positive complex balanced equilibrium of $(\G_L,\kappa_L)$.
 
 Let $c$ be an equilibrium point on the boundary. Consider a linkage class $L$ of $\G$, and assume that $c_S>0$ for any species $S$ appearing in the linkage class. Then, the projection of $c$ onto the species of $L$ is a positive equilibrium of $(\G_L,\kappa_L)$, and therefore complex balanced. It follows that $c$ satisfies \eqref{eq:cb_mass-action} for any complex of $L$.
 Oppositely, assume that there exists a species appearing in the linkage class $L$, such that $c_S=0$ (this can only happen on a boundary state). Remember that by mass-action kinetics, all the rates of reactions whose source complex contains $S$ are zero. In particular, all the rates of reactions degrading $S$ are zero. Consider a complex $y$ in $L$ that contains $S$. By weakly reversibility, there exists a reaction $y'\to y$ in $L$. If $y'$ contains $S$, then $\lambda_{y'\to y}(c)=0$. If $y'$ does not contain $S$, then the reaction $y'\to y$ produces $S$.  Since the rate of all reactions degrading $S$ is zero at $c$ and $c$ is an equilibrium, then $\lambda_{y'\to y}(c)$ must be zero as well. By mass-action kinetics, this means that there exists a species $S'\neq S$ such that $S'$ appears in $y'$ and $c_{S'}=0$. By iteratively applying the same argument with the new species $S'$ and by weakly reversibility, we obtain that $\lambda_{y\to y'}(c)=0$ for any reaction $y\to y'$ in $L$. It follows that $c$ satisfies \eqref{eq:cb_mass-action} for any complex in $L$, since the equation reduces to $0=0$. Equation \eqref{eq:cb_mass-action} is therefore satisfied for any complex of $\G$ and $c$ is a complex balanced equilibrium. This concludes the proof.
 \qed
 
 \subsection{Proof of Theorem \ref{thm:equilibrium_boundary}}
 
  By \cite[Theorem 6.1.2]{feinberg:review}, if $x\in\RR_0^n$ is an equilibrium point and $y\to y'\in\R$, then $\supp y\subseteq \supp x$ only if $y\to y'$ is terminal. Moreover, if $\supp y\subseteq \supp x$, then $\supp \widetilde{y}\subseteq \supp x$ for every complex $\widetilde{y}$ of $\G_y=(\Sp_y,\C_y,\R_y)$.
  
  Now, suppose that $K$ is mass-action kinetics with rate constants $\kappa$, and that $\supp y\subseteq \supp x$ with $y\to y'\in\R$ (and therefore $y\to y'\in\R^*$). Consider
  $$\widetilde{\R}=\{\widetilde{y}\to \widetilde{y}'\in\R\colon \supp \widetilde{y}\subseteq \supp x\}.$$
  By the first part of the statement, the reaction graph of the subnetwork $\widetilde{\G}=(\widetilde{\Sp},\widetilde{\C},\widetilde{\R})$ is a union of terminal strongly connected components of $\G$, and therefore $\widetilde{\G}$ is weakly reversible. Moreover, by Lemma \ref{lem:subnet}, the deficiency of $\widetilde{\G}$ is 0. It is not hard to see that the canonical projection of $x$ onto the space of the species $\widetilde{\Sp}$ is a positive equilibrium point of $(\widetilde{\G},\widetilde{\kappa})$, and therefore complex balanced by Theorem \ref{thm:deficiency_zero_iff}. The proof is concluded by \eqref{eq:cb_mass-action} and by noting that, for any complex $\widetilde{y}\in\C_y$,
  \begin{align*}
   \{\widetilde{y}'\in\widetilde{\C}\colon \widetilde{y}\to \widetilde{y}'\in\widetilde{\R}\} &=\{\widetilde{y}'\in\C_y\colon \widetilde{y}\to \widetilde{y}'\in\R_y\}, \\
   \{\widetilde{y}'\in\widetilde{\C}\colon \widetilde{y}'\to \widetilde{y}\in\widetilde{\R}\} &=\{\widetilde{y}'\in\C_y\colon \widetilde{y}'\to \widetilde{y}\in\R_y\}.
  \end{align*}
  \qed

 \subsection{Proof of Proposition \ref{prop:terminal_strong_linkage_classes}}
 
  If $\R_\Gamma$ is empty there is nothing to prove. Suppose that this is not the case.
  Since $\G_\Gamma$ has deficiency zero, by Lemma \ref{lem:weakly_reversible}, it is weakly reversible. For any $y\to y'\in\R_\Gamma$, by definition there exists $x\in\Gamma$ such that $x\geq y$, which in turn implies $x+y'-y\geq y'$. Therefore, for any directed path in the reaction graph of $\G$ that starts with $y\to y'\in\R_\Gamma$, all the reactions in the path belong to $\R_\Gamma$, by definition of $\R_\Gamma$. Since $\G_\Gamma$ is weakly reversible, this can only happen if $\R_\Gamma\subseteq\R^*$, and this proves the first part of the statement.
  To conclude the proof, note that if the deficiency of $\G$ is zero, then by Lemma \ref{lem:subnet} the deficiency of $\G_\Gamma$ is zero as well.\qed
 
 \subsection{Proof of Theorem \ref{thm:stochastic_complex_balanced}}
 
  For the first part of the statement, consider a continuous-time Markov chain $C_\Gamma(t)$ with state space $\Gamma\times\C$ and transition rate from $(x,y)$ to $(x+y'-y,y')$ given by $\lambda_{y\to y'}(x)$ if $y\to y'\in\R_\Gamma$, and zero otherwise. The master equation for $C_\Gamma(t)$ is 
  \begin{equation*}
   \sum_{y\in\C_\Gamma}\tilde{\pi}(x-y'+y,y)\lambda_{y\to y'}(x-y'+y)=\sum_{y\in\C_\Gamma}\tilde{\pi}(x,y')\lambda_{y'\to y}(x)\quad\forall y'\in\C, x\in\Gamma,
  \end{equation*}
  with the convention that $\lambda_{y\to y'}(x)=0$ if $y\to y'\notin\R_\Gamma$. By Definition \ref{defi:complex_balanced_stationary_distribution}, a stationary distribution for $C_\Gamma(t)$ exists and it is of the form $\tilde{\pi}(x,y)=M\pi(x)$, for a suitable normalising constant $M$. Since $\pi(x)$ is positive for any $x\in\Gamma$ (because it is a stationary distribution on an irreducible component), then by standard Markov chain theory, we have that for any two states $(x_1,y_1),(x_2,y_2)\in\Gamma\times\C$, if $(x_2,y_2)$ is accessible from $(x_1,y_1)$, then $(x_1,y_1)$ is accessible from $(x_2,y_2)$. Fix $y\to y'\in\R_\Gamma$ and $x\in\Gamma$ with $x\geq y$. Then, a directed path from $(x+y'-y,y')$ to $(x,y)$ exists in the graph associated with $C_\Gamma(t)$. The second components of the form $y$ of the states in the path, by construction, determine a directed path in the reaction graph of $\G_\Gamma$ from $y'$ to $y$. Hence, any reaction $y\to y'\in\R_\Gamma$ is contained in a closed directed path, which means that $\G_\Gamma$ is weakly reversible.
  
  Assume now that $K$ is mass-action kinetics with rate constants $\kappa$ and that $c$ is a positive complex balanced equilibrium of $(\G,\kappa)$. Then, by Theorem \ref{thm:anderson}, there exists a (unique) stationary distribution on $\Gamma$ of the form \eqref{eq:definition_stat_prob}. If a species $S_j$ is not in $\Sp_\Gamma$, then the value of $x_j$ is constant for any $x\in\Gamma$, and \eqref{eq:def_stat_distr_gamma} can be obtained from \eqref{eq:definition_stat_prob} by modifying the normalising constant.
 
  By Theorem \ref{thm:complex_balanced} and Lemma \ref{lem:reactions_gamma}, we have that
  $$\sum_{y\in\C_\Gamma}c^{y-y'}\kappa_{y\to y'}=\sum_{y\in\C_\Gamma}\kappa_{y'\to y}\quad\forall y'\in\C_\Gamma,$$
  with $\kappa_{y\to y'}=0$ if $y\to y'\notin\R_\Gamma$. Therefore, for any $y'\in\C_\Gamma$ and $x\in\Gamma$,
  $$\frac{1}{(x-y')!}\sum_{y\in\C_\Gamma}c^{x+y-y'}\kappa_{y\to y'}\mathbbm{1}_{\{x\geq y'\}}\\
   =\frac{1}{(x-y')!}\sum_{y\in\C_\Gamma}c^x\kappa_{y'\to y}\mathbbm{1}_{\{x\geq y'\}},$$
  which leads to \eqref{eq:stochastic_complex_balanced}, since $\pi$ is of the form \eqref{eq:definition_stat_prob}. 
 
  To prove the converse we first introduce a new stochastic mass-action system $(\hat{\G}_\Gamma, \hat{\kappa}_\Gamma)$, which is given by the reactions of the form
  $$y+S_y\to y'+S_{y'}\quad\text{with }y\to y'\in\R_\Gamma,$$
  where $S_{y}$ are fictitious species in one to one correspondence with the complexes $\C_\Gamma$. The rate constant of the reaction $y+S_y\to y'+S_{y'}$ is given by $\kappa_{y\to y'}$. It is not difficult to see that the sum of the fictitious species is conserved for any possible trajectory. Moreover, since any directed path $y_1\to y_2\to\dots y_q$ in the reaction graph of $\G$ corresponds to a directed path $y_1+S_{y_1}\to y_2+S_{y_2}\to\dots y_q+S_{y_q}$ in the reaction graph of $\hat{\G}_\Gamma$, we have that $\hat{\G}_\Gamma$ is weakly reversible by the first part of the proof.
 
  Consider the set
  $$\Upsilon=\{(x,\hat{x})\in\NN^n\times\NN^m:x\in\Gamma, \|\hat{x}\|_1=1\}.$$
  Every state in $\Upsilon$ is of the form $(x,S_y)\in\NN^{n+m}$, where $x\in\Gamma$ and $S_y$ is considered as the vector in $\NN^m$ with entry 1 in the position corresponding to the species $S_y$ and 0 otherwise. Since $\Gamma$ is an irreducible component of $\G$ and the sum of the fictitious species is conserved, no state outside $\Upsilon$ is accessible from any state in $\Upsilon$, according to $\hat{\G}_\Gamma$. Moreover, the master equation on $\Upsilon$ can be written as
  \begin{multline}\label{eq:master_equation_on_Upsilon}
   \sum_{y\in\C_\Gamma}\hat{\pi}(x-y'+y,S_y)\kappa_{y\to y'}\frac{(x-y'+y)!}{(x-y')!}\mathbbm{1}_{\{x\geq y'\}}\\
   =\sum_{y\in\C_\Gamma}\hat{\pi}(x,S_{y'})\kappa_{y'\to y}\frac{x!}{(x-y')!}\mathbbm{1}_{\{x\geq y'\}}\quad\forall y'\in\C, x\in\Gamma.
  \end{multline}
  If we choose $\hat{\pi}(x,\hat{x})=M\pi(x)$ for some positive constant $M$, then the master equation \eqref{eq:master_equation_on_Upsilon} is satisfied due to Definition \ref{defi:complex_balanced_stationary_distribution}. Therefore, if $M$ is chosen as a suitable normalising constant, $\hat{\pi}(x,z)=M\pi(x)$ is a stationary distribution on $\Upsilon$.

  Consider the linear homomorphism $\varphi$ as defined in \eqref{eq:def_phi}, for the reaction network $\hat{\G}_\Gamma$. Let $|\cdot|$ denote the cardinality of a set, and note that $|\hat{\C}_\Gamma|=|\C_\Gamma|=m_\Gamma$. For any vector $e_y$ of the basis of $\RR^{m_\Gamma}$, we have $\varphi(e_y)=(y,S_y)$. Since the vectors $(y,S_y)$ with $y\in\C_\Gamma$ are linear independent, $\varphi$ is an isomorphism and the deficiency of $\hat{\G}_\Gamma$ is 0.
 
  Since $\hat{\G}_\Gamma$ is a deficiency zero weakly reversible reaction network, it follows from Theorem \ref{thm:deficiency_zero_iff} that the mass-action system $(\hat{\G}_\Gamma, \kappa)$ is complex balanced. Therefore, by Theorem \ref{thm:anderson}, we have that $\hat{\pi}$ has the form
  $$\hat{\pi}(x,\hat{x})=M^{(c,\hat{c})}_{\hat{\Gamma}}\frac{c^x}{x!}\frac{\hat{c}^{\hat{x}}}{\hat{x}!},$$
  for a positive complex balanced equilibrium $(c,\hat{c})$, on any irreducible component $\hat{\Gamma}$ contained in $\Upsilon$. Since $\hat{\pi}(x,\hat{x})=M\pi(x)$ does not depend on $\hat{x}$, we have
  $$\hat{\pi}(x,\hat{x})=M^c_{\Gamma}\frac{c^x}{x!},$$
  for any $(x,\hat{x})\in\Upsilon$. 
 
  Fix a complex $y' \in\C_\Gamma$. Since $\G_\Gamma$ is weakly reversible, there exists a reaction $y'\to y$ that is active on $\Gamma$. Fix $x\in\Gamma$ such that $x\geq y'$. Then for any $y\to y'\in\R_\Gamma$ we have $x-y'+y\geq y$. If we plug the formula for $\hat{\pi}(x,\hat{x})$ in \eqref{eq:master_equation_on_Upsilon} for our choice of $x$ and $y'$, we obtain
  \begin{equation*}
   \sum_{y\in\C_\Gamma}M^c_{\Gamma}\frac{c^{x-y'+y}}{(x-y'+y)!}\kappa_{y\to y'}\frac{(x-y'+y)!}{(x-y')!}=\sum_{y\in\C_\Gamma}M^c_{\Gamma}\frac{c^x}{x!}\kappa_{y'\to y}\frac{x!}{(x-y')!},
  \end{equation*}
  which leads to
  $$\sum_{y\in\C_\Gamma}c^{y-y'}\kappa_{y\to y'}=\sum_{y\in\C_\Gamma}\kappa_{y'\to y}.$$
  The proof is concluded by the fact that the above holds for any fixed $y'\in\C_\Gamma$, which means that $c$ is a positive complex balanced equilibrium of $(\G_\Gamma,\kappa_\Gamma)$.\qed

 \subsection{Proof of Theorem \ref{thm:stationary_boundary}}
  
  By Lemma \ref{lem:weakly_reversible}, $\G_\Gamma$ is weakly reversible. Moreover, for $y\to y'\in\R_\Gamma$, if $x\geq y$ then $x+y'-y\geq y'$. This implies that for any directed path in the reaction graph of $\G$ that starts with $y\to y'\in\R_\Gamma$, all the reactions in the path belong to $\R_\Gamma$, by definition of $\R_\Gamma$. Since $\G_\Gamma$ is weakly reversible, every directed path in the reaction graph of $\G$ that starts with $y\to y'\in\R_\Gamma$ is contained in a closed directed path. This implies that $\R_\Gamma\subseteq\R^*$, and proves the first part of the statement.
  
  Now assume that $K$ is mass-action kinetics with rate constants $\kappa$. If the deficiency of $\G$ is zero, then by Lemma \ref{lem:subnet} the deficiency of the terminal network is zero as well. Moreover, $\G^*$ is weakly reversible by definition, thus by Theorem \ref{thm:deficiency_zero_iff} $(\G^*,\kappa^*)$ is complex balanced for any choice of rate constants $\kappa^*$.
  
  Let $X(t)$ be the stochastic process associated with $(\G,\kappa)$. By the first part of the statement, on $\Gamma$ only terminal reactions take place and these involve a subset of the species only. Without loss of generality, we can assume that $\Sp^*$ is constituted by the first $n^*$ species of $\Sp$. Therefore, $\Gamma$ is of the form $\Gamma^*\times \{v\}$, with $\Gamma^*\subseteq\R^{n^*}$ and $v\in\RR^{n-n^*}$. Moreover, we have that on $\Gamma^*$, the projection $X^*(t)=(X_1(t),\dots,X_{n^*}(t))$ is distributed as the process associated with $(\G^*,\kappa^*)$, for which $\Gamma^*$ is an irreducible component. Let $c$ be a positive complex balanced equilibrium for $(\G^*,\kappa^*)$. Hence, by Theorem \ref{thm:anderson} or Corollary \ref{cor:stochastic_complex_balanced_positive}, the stationary distribution of the process $X(t)=(X^*(t),v)$ on $\Gamma$ is of the form \eqref{eq:stat_prob_terminal}.\qed

 \subsection{Proof of Theorem \ref{thm:stoch_deficiency_zero}}
 
  For the first part, we prove that if an irreducible component $\Gamma$ is positive, then $\G$ is weakly reversible. This simply follows from Lemma \ref{lem:weakly_reversible}: indeed, by the lemma, $\G_\Gamma$ is weakly reversible and since $\Gamma$ is positive, $\G_\Gamma=\G$.
  
  To prove the second part, we have to show that a weakly reversible reaction network is essential, and this is done in \cite{craciun:dynamical}. Moreover, a deficiency zero weakly reversible mass-action system is complex balanced, and the proof is concluded by Theorem \ref{thm:anderson} or Corollary \ref{cor:stochastic_complex_balanced_positive}. \qed

  \section{Calculations for Examples \ref{ex:not_complex_balanced} and \ref{ex:not_complex_balanced_2}}\label{sec:calculations}
  
  In Example \ref{ex:not_complex_balanced}, we claim that the stationary distribution on the irreducible component $\Gamma_\theta=\{x\in\NN^2\colon x_1+x_2=\theta\}$ has the form 
  \begin{equation*}
   \pi_\theta(x_1,x_2)=M_\theta\frac{1}{x_1!x_2!}\quad\text{for }(x_1,x_2)\in\Gamma_\theta.
  \end{equation*}
  To prove this, it is sufficient to show that $\pi_\theta$ satisfies the master equation for every point $(x_1,x_2)$ of $\Gamma_\theta$. The master equation on $(x_1,x_2)$ is given by
  \begin{multline*}
   \rho\pi_\theta(x_1+1,x_2-1)(\theta-1)(x_1+1)\mathbbm{1}_{\{x_2\geq1\}}+\rho\pi_\theta(x_1-2,x_2+2)\frac{(x_2+2)!}{x_2!}\mathbbm{1}_{\{x_1\geq2\}}\\
   =\rho\pi_\theta(x_1,x_2)\pr{(\theta-1)x_1\mathbbm{1}_{\{x_1\geq1\}}+\frac{(x_2)!}{(x_2-2)!}\mathbbm{1}_{\{x_2\geq2\}}}.
  \end{multline*}
 By plugging in the formula for $\pi_\theta$ and after dividing by $\rho$ and $M_\theta$ we obtain
 $$\frac{1}{x_1!x_2!}[x_2(\theta-1)+x_1(x_1-1)]=\frac{1}{x_1!x_2!}[x_1(\theta-1)+x_2(x_2-1)].$$
 If we multiply by $x_1!x_2!$ and substitute $\theta=x_1+x_2$, it follows that
 $$x_2(x_1+x_2-1)+x_1(x_1-1)=x_1(x_1+x_2-1)+x_2(x_2-1),$$
 that is
 $$x_1x_2+x_2^2-x_2+x_1^2-x_1=x_1^2+x_1x_2-x_1+x_2^2-x_2,$$
 which always holds true because the terms cancel each other.

 In Example \ref{ex:not_complex_balanced_2}, we change the notation to $\Gamma_\theta=\{x\in\NN^2\colon x_1+x_2=\theta+1\}$. Then we claim that the stationary distributions on the irreducible components $\Gamma_{\theta_1}$ and $\Gamma_{\theta_2}$ are $\pi_{\theta_1}$ and $\pi_{\theta_2}$, respectively, where as before 
  \begin{equation*}
   \pi_\theta(x_1,x_2)=M_\theta\frac{1}{x_1!x_2!}\quad\text{for }(x_1,x_2)\in\Gamma_\theta.
  \end{equation*}
  We prove that $\pi_{\theta_1}$ is the stationary distribution on $\Gamma_{\theta_1}$. The case with $\theta_2$ is analogue. We prove the result by consider the master equation for $\pi_{\theta_1}$ on a point $(x_1,x_2)\in\Gamma_{\theta_1}$, which is as following:
 \begin{multline*}
  \rho\pi_{\theta_1}(x_1+1,x_2-1)\theta_1\theta_2(x_1+1)\mathbbm{1}_{\{x_2\geq1\}}\\
  +\rho\pi_{\theta_1}(x_1-2,x_2+2)(\theta_1+\theta_2-1)\frac{(x_2+2)!}{x_2!}\mathbbm{1}_{\{x_1\geq2\}}\\
  +\rho\pi_{\theta_1}(x_1+2,x_2-2)\frac{(x_1+2)!}{(x_1-1)!}\mathbbm{1}_{\{x_1\geq1,x_2\geq2\}}\\
  +\rho\pi_{\theta_1}(x_1+2,x_2-2)\frac{(x_1+2)!(x_2-2)!}{x_1!(x_2-3)!}\mathbbm{1}_{\{x_1\geq0,x_2\geq3\}}\\
   =\rho\pi_{\theta_1}(x_1,x_2)\theta_1\theta_2x_1\mathbbm{1}_{\{x_1\geq1\}}+\rho\pi_{\theta_1}(x_1,x_2)(\theta_1+\theta_2-1)\frac{x_2!}{(x_2-2)!}\mathbbm{1}_{\{x_2\geq2\}}\\
   +\rho\pi_{\theta_1}(x_1,x_2)\frac{x_1!}{(x_1-3)!}\mathbbm{1}_{\{x_1\geq3\}}
   +\rho\pi_{\theta_1}(x_1,x_2)\frac{x_1!x_2!}{(x_1-2)!(x_2-1)!}\mathbbm{1}_{\{x_1\geq2,x_2\geq1\}}.
 \end{multline*}
 As we did for the previous calculations, we plug in the expression for $\pi_{\theta_1}$, then divide by $M_{\theta_1}$, $\rho$ and multiply by $x_1!x_2!$. We obtain
  \begin{multline*}
  \theta_1\theta_2x_2+(\theta_1+\theta_2-1)x_1(x_1-1)+x_1x_2(x_2-1)+x_2(x_2-1)(x_2-2)\\
   =\theta_1\theta_2x_1+(\theta_1+\theta_2-1)x_2(x_2-1)+x_1(x_1-1)(x_1-2)+x_1(x_1-1)x_2.
 \end{multline*}
 Finally, by substituting $\theta_1$ with $x_1+x_2-1$ and by performing the calculations, we obtain $0=0$, which means that the above equation is satisfied.
 \end{appendix}

\end{document}